\documentclass[12pt]{amsart}
\usepackage{amsfonts}
\usepackage{amsmath,amscd, amsthm, tabularx}
\newtheorem{prop}{Proposition}[section]
\newtheorem{thm}[prop]{Theorem}

\newtheorem{lemma}[prop]{Lemma}
\newtheorem{corollary}[prop]{Corollary}
\newtheorem{rmk}[prop]{Remark}
\theoremstyle{definition}
\newtheorem{definition}[prop]{Definition}
\usepackage{cite,graphicx,color}
\hyphenation{Baty-rev}

\def\Hom{{\rm Hom}}
\def\Cone{{\rm Cone}}
\def\Conv{{\rm Conv}}

\textwidth=14.6cm \textheight=22cm \topmargin=0.5cm
\oddsidemargin=0.5cm \evensidemargin=0.5cm \pagestyle{plain}

\title{Generalized compactifications of Batyrev hypersurface families}
\author{Karl Fredrickson}
\address{8307 22nd Ave. NW \\
Seattle, WA, 98117}
\email{karlfredrickson@gmail.com}
\date{}

\begin{document}
\maketitle
\begin{abstract}We show how Calabi-Yau hypersurface families arising from Batyrev's construction can be resolved and compactified using a type of fan more general than an MPCP resolution.  This can lead to smooth projective compactifications that are not obtainable from the original construction.  In the threefold case, we show that generic members of the resulting family are always smooth.
\end{abstract}

\section{Introduction}

Batyrev's fundamental construction of Calabi-Yau hypersurfaces in \cite{Ba} provides both a large source of Calabi-Yau families and a very explicit construction of mirror symmetry for these families. Let us recall the basic setup. Let $N \cong \mathbb{Z}^d$ for some $d$ be a lattice, and consider the corresponding real vector space $N_\mathbb{R} = N \otimes \mathbb{R}$, the dual lattice $M = \Hom(N, \mathbb{Z})$, and the dual vector space $M_\mathbb{R} = M \otimes \mathbb{R}$.  Also let $\langle, \rangle$ be the natural real-valued dual pairing between $M_\mathbb{R}$ and $N_\mathbb{R}$. 
A lattice polytope in $N_\mathbb{R}$ is the convex hull of finitely many lattice points in $N$. (For an introduction to toric geometry, see the books \cite{CLS} or \cite{fulton}.)

To construct a Calabi-Yau hypersurface family, we first need a reflexive polytope \cite{Ba}.  A lattice polytope $\Delta \subseteq N_\mathbb{R}$ with the origin in its interior is said to be {\em reflexive} if its dual polytope $$\Delta^* = \{ m \in M_\mathbb{R} \ | \ \langle m, n \rangle \geq -1 \hbox{ for all } n \in \Delta \}$$ is also lattice polytope in $M_\mathbb{R}$.  The Calabi-Yau hypersurface family associated to $\Delta$ is a compactification of the hypersurface family in the torus Spec $\mathbb{C}[M] \cong (\mathbb{C}^*)^d$ defined by 
\begin{equation} \label{definingeqn}
\sum_{m \in M \cap \Delta^*} c_m z^m = 0
\end{equation}
where the $c_m \in \mathbb{C}$ are generic coefficients.

To compactify this family, the open torus Spec $\mathbb{C}[M]$ needs to be included into a larger toric variety.  One natural choice is the toric variety $X(\Delta)$ given by the fan $\Sigma(\Delta)$, the fan of cones over proper faces of $\Delta$.  In the case that the toric variety defined by $\Sigma(\Delta)$ is smooth, this yields a family $\mathcal{X}(\Delta) \subseteq X(\Delta)$ of smooth projective Calabi-Yau hypersurfaces.  In general, however, $X(\Delta)$ is not smooth, and to desingularize the Calabi-Yau family as much as possible, we require another important ingredient of Batyrev's construction, an MPCP (maximal projective crepant partial) resolution of $X(\Delta)$.  An {\em MPCP resolution} of the toric variety $X(\Delta)$ is defined by a fan $\widehat{\Sigma}(\Delta)$ which satisfies the following two conditions:
\begin{enumerate}
\item[(i)] $\widehat{\Sigma}(\Delta)$ is complete, simplicial, projective, and a subdivision of $\Sigma(\Delta)$.
\item[(ii)] The set of rays of $\widehat{\Sigma}(\Delta)$ is equal to the set of rays over nonzero lattice points in $\Delta$.
\end{enumerate}

If $\Delta$ is of dimension four or less, then Equation \eqref{definingeqn} will define a family of smooth Calabi-Yau hypersurfaces in the toric variety given by $\widehat{\Sigma}(\Delta)$.  

\bigskip
The purpose of this paper is to study what happens when the hypersurface family defined by Equation \eqref{definingeqn} is compactified and resolved with a type of fan more general than an MPCP resolution, which we call a $\Delta$-maximal fan.  A {\em $\Delta$-maximal fan} has the same definition as an MPCP resolution, only without the requirements that it has to be a subdivision of $\Sigma(\Delta)$ or projective. This paper is devoted to proving smoothness and regularity results for the toric variety $X(\Sigma)$ associated to a $\Delta$-maximal fan $\Sigma$, and for the anticanonical hypersurface family $\mathcal{X}(\Sigma) \subseteq X(\Sigma)$.

These questions have already been studied for a particular example in \cite{fred3}.  It concerned a four-dimensional {\em smooth Fano polytope} $\Delta$ (i.e., a reflexive polytope with the property that the toric variety $X(\Delta)$ is smooth), so a family of smooth Calabi-Yau hypersurfaces $\mathcal{X}(\Delta) \subseteq X(\Delta)$ exists without the need for an MPCP resolution.  Nonetheless, there is another simplicial fan with the same set of rays as $\Sigma(\Delta)$, giving another compactification $\mathcal{X}_{bl}$.  Members of $\mathcal{X}_{bl}$ are smooth projective Calabi-Yau varieties that are topologically distinct from those of $\mathcal{X}(\Delta)$. By further analyzing Calabi-Yau 
varieties with the same Hodge numbers in Kreuzer and Skarke's database \cite{ks}, it was shown that members of $\mathcal{X}_{bl}$ are topologically distinct from all other CY varieties that come from MPCP resolutions of reflexive four-polytopes. The fact that topologically distinct Calabi-Yau families can be produced is one of the reasons why $\Delta$-maximal fans are interesting.  For a further discussion of this example we refer to Section 5.

\bigskip

Let us describe organization and main results of this paper. 

In Section 2 we prove that for a reflexive polytope $\Delta$ and a $\Delta$-maximal fan $\Sigma$ of arbitrary dimension $X(\Sigma)$ always has a singular locus of codimension $\geq 4$ (Proposition~\ref{prop:3dimsmooth}).

Sections 3 and 4 focus on reflexive polytopes of dimension four.  It follows from Proposition~\ref{prop:3dimsmooth} that in this case a $\Delta$-maximal fan $\Sigma$ must be Gorenstein. This means the anticanonical line bundle $\mathcal{L}$ of $X(\Sigma)$ exists and the family $\mathcal{X}(\Sigma)$ can be defined by taking generic global sections of $\mathcal{L}$. We show that generic members of $\mathcal{X}(\Sigma)$ have at worst isolated singularities occuring at zero-dimensional toric strata of $X(\Sigma)$ (Proposition \ref{smoothhypersurface}).  Proposition \ref{smoothness} and Definition \ref{maximalcones} then give a combinatorial criterion (the notion of a reflexive 4-polytope having ``good maximal cones'') that suffices to show generic members of $\mathcal{X}(\Sigma)$ are smooth at these remaining points. 

In section 4, we prove that this combinatorial condition always holds, so that generic members of $\mathcal{X}(\Sigma)$ are smooth for any reflexive 4-polytope $\Delta$ and $\Delta$-maximal fan $\Sigma$ (Theorem \ref{goodness}).  The proof uses the secondary fan associated to the set of lattice points in $\Delta$.  At the beginning of the section we briefly introduce the definitions and facts about the secondary fan and wall crossings that will be needed for the proof.

As mentioned above, Section 5 discusses a particular example of a $\Delta$-maximal fan and the associated family of smooth CY threefolds.  

Section 6 describes how our main result can be applied to the problem of constructing an \emph{extremal transition} between two families of CY threefolds, given a nested pair of reflexive 4-polytopes $\Delta_1 \subseteq \Delta_2$.

\bigskip

It may be useful to briefly explain why analyzing the members of $\mathcal{X}(\Sigma)$ is more difficult in the case of an arbitrary $\Delta$-maximal fan than an MPCP resolution.  In Batyrev's construction generic members of the hypersurface family satisfy a condition called ``$\Delta$-regularity'' (Definition 3.1.1 from \cite{Ba}), which implies that all their singularities are inherited from the singularities of the ambient toric variety.  Thus, if the toric variety is sufficiently smooth (as in the case of an MPCP subdivision of a reflexive four-polytope), generic anticanonical hypersurfaces in the toric variety will be smooth. Because hypersurfaces in a more general $\Delta$-maximal fan need not satisfy $\Delta$-regularity, singularities of $\mathcal{X}(\Sigma)$ could potentially occur even on a smooth open subset of $X(\Sigma)$.

Lastly, it bears mentioning for future work that these results could potentially be applied to the complete intersection Calabi-Yau families studied by Batyrev and Borisov \cite{Borisov,BB}, since resolving such a family also requires an MPCP subdivision of a reflexive polytope. 

\bigskip

\textbf{Acknowledgements.} I would like to thank the organizers of the conference ``Calabi-Yau Manifolds and their Moduli'' for the invitation to speak.  I would also like to thank Benjamin Nill for substantial help with the results in the paper, including the proof of Proposition \ref{prop:3dimsmooth}, as well as help with writing and organization.

\section{$\Delta$-maximal fans and their singular locus}

Let us first fix some notation. Given points $v_1, \dots, v_n$ in a real vector space, we use $\Conv(v_1, \dots, v_n)$ to denote their convex hull.  We also define 
$$\Cone(v_1, \dots, v_n) = \{ r_1 v_1 + \cdots + r_n v_n \ | \ r_1, \dots r_n \in \mathbb{R}_{\geq 0} \},$$ the convex cone generated by $v_1, \dots, v_n$.  A cone is said to be pointed, or strongly convex, if it contains no nontrivial linear subspaces of the ambient vector space.

Given a fan $\Sigma$ in $N_\mathbb{R}$, $X(\Sigma)$ will always denote the toric variety obtained from $\Sigma$.  Where appropriate, $\mathcal{X}(\Sigma)$ will denote the family of generic anticanonical hypersurfaces in $X(\Sigma)$. Given any fan $\Sigma$, we write $\Sigma^{[n]}$ for the subfan consisting of all cones of $\Sigma$ of dimension $n$ or less.

For a facet (maximal proper face) $F$ of a reflexive polytope $\Delta \subseteq N_\mathbb{R}$, we will sometimes make use of the outer normal to $F$, $u_F \in M$.  This is defined as the unique lattice point such that $\langle u_F, x \rangle = 1$ for all $x \in F$.  In terms of the dual polytope $\Delta^* \subseteq M_\mathbb{R}$ defined before, $u_F$ is the negative of the vertex $v \in \Delta^*$ dual to $F$.

\smallskip

Let us define the object of study of this paper.

\begin{definition}  Given a fan $\Sigma$ and a reflexive polytope $\Delta$, we say $\Sigma$ is {\it $\Delta$-maximal} if it satisfies the following:

1. The set of rays of $\Sigma$ is equal to the set of rays over nonzero lattice points in $\Delta$.

2. $\Sigma$ is complete and simplicial.
\end{definition}

The crucial difference between this definition and the definition of an MPCP resolution is that the fan associated to an MPCP resolution must be a subdivision of $\Sigma(\Delta)$, whereas there is no such restriction on a $\Delta$-maximal fan.  Also, note that we do not require $\Sigma$ to be projective. 

\smallskip
Our first result shows that it is possible to generalize well-known statements about MPCP resolutions in \cite{Ba} to this more general situation. Let us recall that a (necessarily simplicial) cone is {\em unimodular}, if it is spanned by a lattice basis. A fan is {\em unimodular}, if all its maximal cones are unimodular (equivalently, the associated toric variety is smooth).

\begin{prop} \label{prop:3dimsmooth}\label{prop:fourdimsmooth} Let $\Delta$ be a reflexive polytope and $\Sigma$ a $\Delta$-maximal fan. 
\begin{enumerate}
\item Any cone of $\Sigma$ of dimension at most $3$ is unimodular.  In particular, 
the toric variety $X(\Sigma)$ has a singular locus of codimension $\geq 4$. 
\item Any cone of $\Sigma$ of dimension $4$ is unimodular, if it is not contained in a maximal cone of $\Sigma(\Delta)$. 
In particular, the toric variety $X(\Sigma^{[4]})$ is Gorenstein.
\end{enumerate}
\end{prop}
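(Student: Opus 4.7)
The plan is to prove both parts simultaneously by contradiction. Suppose $\sigma = \Cone(v_1, \dots, v_k) \in \Sigma$ is a non-unimodular cone with $k \le 3$, or a non-unimodular $4$-cone not contained in a maximal cone of $\Sigma(\Delta)$; I will produce a nonzero lattice point $\tilde w$ of $\Delta$ lying in $\sigma$ and distinct from every $v_j$. Since every nonzero lattice point of a reflexive polytope is primitive (it lies on some facet $F$ with $\langle u_F,\cdot\rangle = 1$), the ray through $\tilde w$ would then be a ray of $\Sigma$ by $\Delta$-maximality, sitting inside $\sigma$ without being a face of $\sigma$; this violates $\Sigma$ being a fan.

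Non-unimodularity supplies a nonzero lattice point $w = \sum t_i v_i$ with $t_i \in [0,1)$ in the half-open parallelepiped of $\sigma$. Arguing by induction on $k$, unimodularity of the proper faces of $\sigma$ forces each $t_i > 0$: if $t_j = 0$, then $w \in N \cap \mathrm{span}(v_i : i \neq j) = \sum_{i \neq j} \mathbb{Z} v_i$ by the inductive hypothesis, so the remaining $t_i$ would be integers in $[0,1)$ and hence zero. The key estimate then comes from reflexivity. For any facet $F$ of $\Delta$ with outer normal $u_F$, using $\langle u_F, v_i\rangle \ge -1$,
\[
\langle u_F, w\rangle = \sum t_i \langle u_F, v_i\rangle \;\ge\; -T, \qquad T := \sum t_i.
\]
When $T < 2$, integrality promotes this to $\langle u_F, w\rangle \ge -1$ for every facet $F$, placing $w$ in $\Delta$. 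The mirror point $w' := v_1 + \cdots + v_k - w$ also lies in the parallelepiped and satisfies $T(w') = k - T$, so for $T > k - 2$ the same estimate puts $w'$ in $\Delta$.

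For $k \in \{2,3\}$ the ranges $T < 2$ and $T > k-2$ together cover $[0,k)$, delivering part~(1). The main obstacle is the borderline case $k = 4, T = 2$, where the estimate only yields $\langle u_F, w\rangle \ge -2$. Here I will analyze when equality is possible: $\langle u_F, w\rangle = -2$ combined with $t_i > 0$ forces $\langle u_F, v_i\rangle = -1$ for every $i$, placing all of $v_1, \dots, v_4$ on the proper face $\Delta \cap \{\langle u_F,\cdot\rangle = -1\}$. That face is contained in some facet $F'$, so $\sigma \subseteq \Cone(F')$ would lie in a maximal cone of $\Sigma(\Delta)$, contradicting the hypothesis of~(2). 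Hence $w \in \Delta$ in this borderline case as well, completing part~(2).

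For the Gorenstein claim, each maximal $4$-cone $\sigma$ of $\Sigma^{[4]}$ must admit a character $m_\sigma \in M$ with $\langle m_\sigma, v_i\rangle = 1$ on every ray generator. If $v_1,\dots,v_4$ all lie on a facet $F$, I take $m_\sigma = u_F$; otherwise part~(2) makes $\sigma$ unimodular, so $\sum_i \mathbb{Z} v_i$ coincides with the saturated sublattice $\Lambda := N \cap \mathrm{span}(v_1,\dots,v_4)$, and since $N/\Lambda$ is torsion-free the ``all-ones'' functional on the basis $v_1,\dots,v_4$ lifts to an element of $M$. The single genuinely delicate step I expect is the boundary case $k = 4, T = 2$, which is precisely where the existence of a suitable antipodal face of $\Delta$ is needed to push $\sigma$ back into a cone of $\Sigma(\Delta)$.
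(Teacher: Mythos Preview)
Your argument is correct and follows essentially the same route as the paper's: take a lattice point $w=\sum t_i v_i$ in the half-open parallelepiped, use minimality/induction to force all $t_i>0$, apply the mirror trick $w\mapsto \sum v_i - w$ to reduce to $T\le k/2$, bound against a facet normal, and observe that the borderline case $k=4$, $T=2$ forces $\langle u_F,v_i\rangle$ to be extremal for every $i$, placing all $v_i$ on a common facet. The only discrepancies are cosmetic: you use the inner-normal convention $\langle u_F,\cdot\rangle\ge -1$ (which you once mislabel with ``$=1$'') and quantify over all facets, whereas the paper uses the outer-normal convention $\langle u_F,\cdot\rangle\le 1$ and picks the single facet whose cone contains $w$; your explicit Gorenstein paragraph is fine but the paper leaves that ``in particular'' unproved.
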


\begin{rmk}{\rm Let us note that these results are optimal in the following sense. First, there exist reflexive polytopes $\Delta$ in dimension $4$ such that every lattice point in $\Delta$ is a vertex, and the fan $\Sigma(\Delta)$ is simplicial but not unimodular. Hence, $\Sigma(\Delta)$ is $\Delta$-maximal but not unimodular. Second, it is computationally straightforward to find five linearly independent lattice points in the dual reflexive 5-polytope $\Delta^*$ associated to $\mathbb{P}^5$ such that the cone over these points is not contained in a maximal cone of $\Sigma(\Delta^*)$, it contains no other lattice points of $\Delta^*$, and it is not unimodular. This cone can then be extended to a $\Delta^*$-maximal fan.}
\end{rmk}

\smallskip

\begin{proof} Let us assume there exists a non-unimodular $k$-dimensional cone $C$ in a $\Delta$-maximal fan $\Sigma$ with $2 \le k \le 4$. We may assume 
that $k$ is chosen minimal. Let $C$ be spanned by lattice points $v_1, \ldots, v_k$ in $\partial \Delta$. We consider the lattice parallelepiped
$$P = \left\{ \sum_{i=1}^k c_i v_i \ | \ 0 \leq c_i \leq 1 \text{ for } i=1,\ldots,k\right\}.$$ 
If $C$ is not a unimodular cone, then $P$ must contain a lattice point $x$ which is not an integer linear combination of $v_1, \dots, v_k$, so that $x = \sum_{i=1}^k a_i v_i$, $a_i \in [0,1]$ (for $i=1,\ldots,k$) with not all $a_i$ integers. By the minimality assumption, we have $a_i \not =0$ for all $i=1, \ldots, k$. We may also assume that $a_1+ \cdots + a_k \leq \frac{k}{2}$, since otherwise we can replace $x$ with the lattice point $v_1+\cdots+v_k-x$, also contained in $P$. Since $C$ is a cone in a $\Delta$-maximal fan, $x$ cannot be contained in $\Delta$.

Let $F$ be a maximal face of $\Delta$ such that $x$ is contained in the cone over $F$.  Let $u_F$ be the outer normal to $F$.  Then $2 \le \langle u_F, x \rangle$ since $x \not \in \Delta$ and $\langle u_F, x \rangle$ is an integer. On the other hand, $\langle u_F, x \rangle = \langle u_F, \sum_{i=1}^k a_i v_i \rangle \le a_1+ \cdots +a_k \leq \frac{k}{2},$ because each $\langle u_F, v_i \rangle$ is no more than 1.  This implies $4 \le k$, hence $k=4$ and $\langle u_F, v_i \rangle = 1$ for $i=1,\ldots,k$, so all $v_1, \ldots, v_k$ lie in $F$.
\end{proof}

\section{CY-threefolds coming from $\Delta$-maximal fans}

We will now restrict our focus to the case of four-dimensional reflexive polytopes. It follows from Proposition~\ref{prop:fourdimsmooth} that given a reflexive 4-polytope $\Delta$ and a $\Delta$-maximal fan $\Sigma$, that $X(\Sigma)$ is Gorenstein, and therefore we can define a family of hypersurfaces in $X(\Sigma)$ by taking global sections of the anticanonical bundle.  Because the rays of $\Sigma$ consist of all the rays over nonzero lattice points in $\Delta$, the set of monomial global sections of the anticanonical bundle will be in bijective correspondence with lattice points in $\Delta^*$, just as in the standard Batyrev construction.  Thus in the equation $$\sum_{m \in M \cap \Delta^*} c_m z^m = 0$$ from the introduction, we may consider the left-hand side as a generic global section of the anticanonical bundle of $X(\Sigma)$, defining the family $\mathcal{X}(\Sigma)$. 

We will need the following basic principle concerning lattice points in reflexive polytopes (note that the case of two vertices is contained in \cite[Proposition 4.1]{Nil05}).

\begin{lemma} \label{lemma:commonface} If $v_1, \dots, v_k$ are lattice points in a reflexive polytope $\Delta$, then $v_1+\cdots+v_k$ is contained in $r \cdot \partial \Delta$ for some integer $0 \leq r \leq k$, and $r=k$ if and only if $v_1, \dots, v_k$ are in a common proper face of $\Delta$. \end{lemma}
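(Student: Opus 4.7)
The plan is to exploit the outer facet normals $u_F \in M$ introduced just above the statement of the lemma, which satisfy $\langle u_F, x\rangle \leq 1$ for every $x \in \Delta$, with equality exactly when $x \in F$. Since $u_F \in M$ and each $v_i \in N$, the pairing $\langle u_F, v_i\rangle$ is an integer at most $1$, and hence for the sum $w := v_1 + \cdots + v_k$ the value $\langle u_F, w\rangle$ is an integer with $\langle u_F, w\rangle \leq k$ for every facet $F$.

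I would then define $r := \max_F \langle u_F, w\rangle$, where the maximum runs over all facets $F$ of $\Delta$. By the previous paragraph $r$ is an integer and $r \leq k$. For the lower bound, if $w = 0$ then $r = 0$; if $w \neq 0$, then because $0$ lies in the interior of $\Delta$ (reflexivity) there is some $t > 0$ with $tw \in \partial\Delta$, so $tw$ lies on some facet $F$, giving $\langle u_F, w\rangle = 1/t > 0$ and hence $r \geq 1$. By the very definition of $r$ we have $\langle u_F, w\rangle \leq r$ for every facet $F$ with equality for at least one $F$, which is precisely the statement that $w$ lies on the boundary of $r\Delta$.

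For the iff characterization: if $r = k$, take a facet $F$ achieving the maximum; then $\sum_i \langle u_F, v_i\rangle = k$ together with the termwise bound $\langle u_F, v_i\rangle \leq 1$ forces each $\langle u_F, v_i\rangle = 1$, so every $v_i$ lies in $F$, which is a common proper face. Conversely, if the $v_i$ share a common proper face, that face is contained in some facet $F$, and then $\langle u_F, v_i\rangle = 1$ for all $i$ yields $\langle u_F, w\rangle = k$, so $r \geq k$, and combined with $r \leq k$ this gives $r = k$.

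I do not anticipate a real obstacle; the argument is essentially an integrality check against the facet inequalities defining $\Delta$. The only mildly delicate points are handling the degenerate case $w = 0$ (where $r = 0$ works trivially) and producing the strict positivity $r \geq 1$ when $w \neq 0$, which is where reflexivity enters through the interiority of the origin. The passage between ``common proper face'' and ``common facet'' is harmless since every proper face of a polytope is contained in some facet.
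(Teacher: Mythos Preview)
Your proof is correct and follows essentially the same line as the paper's. The only cosmetic difference is that the paper picks a facet $F$ whose cone contains $w = v_1+\cdots+v_k$ and sets $r := \langle u_F, w\rangle$, whereas you define $r := \max_F \langle u_F, w\rangle$; these agree because the facet whose cone contains $w$ realizes the maximum. Your write-up is in fact more careful than the paper's: you handle $w=0$ explicitly, justify $r\ge 0$ via interiority of the origin, and spell out both directions of the ``if and only if'', while the paper's short argument leaves the converse implicit.
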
  

\begin{proof}  Let $F$ be a maximal face of $\Delta$ such that $v_1 + \cdots + v_k$ is contained in the cone over $F$.  Then $v_1 + \cdots + v_k$ is in $r F$ for 
$0 \le r := \langle u_F, v_1 + \cdots + v_k \rangle \le \sum_{i=1}^k \langle u_F, v_i \rangle \le k$. Here, the upper equality implies that $v_1, \ldots, v_k$ are in $F$.
\end{proof}

The following two lemmas will be used at later points in proving smoothness.

\begin{lemma} \label{lemma:edges} Let $C = \Cone(v_1,v_2)$ be a two-dimensional cone in a $\Delta$-maximal fan $\Sigma$, where $v_1$ and $v_2$ are lattice points in $\partial \Delta$.  Then the line segment $\Conv(v_1,v_2)$ is contained in $\partial \Delta$. \end{lemma}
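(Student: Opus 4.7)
The plan is to analyze the sum $v_1+v_2$ using Lemma~\ref{lemma:commonface} applied to the two-element set $\{v_1, v_2\}$. The lemma places $v_1+v_2$ in $r\cdot \partial \Delta$ for some integer $0\le r\le 2$, with $r=2$ if and only if $v_1$ and $v_2$ lie in a common proper face $F$ of $\Delta$. In that situation $\Conv(v_1,v_2)\subseteq F\subseteq \partial\Delta$, which is exactly the desired conclusion. So the proof reduces to ruling out $r=0$ and $r=1$.

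The case $r=0$ is immediate: $v_1+v_2=0$ forces $v_2=-v_1$, so $C$ contains the line $\mathbb{R} v_1$, contradicting the requirement that cones in a fan be strongly convex.

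The case $r=1$ is where the fan structure of $\Sigma$ does the work. Here $v_1+v_2$ is a nonzero lattice point of $\Delta$, so by the $\Delta$-maximal property of $\Sigma$ the ray $\rho := \mathbb{R}_{\ge 0}(v_1+v_2)$ is a ray of $\Sigma$. Since $v_1$ and $v_2$ are linearly independent (as $\dim C=2$), the expression $v_1+v_2 = 1\cdot v_1 + 1\cdot v_2$ shows that $v_1+v_2$ lies in the relative interior of $C$. The intersection $\rho\cap C$ is then a common face of both cones that meets the relative interior of each, so this common face must equal both $\rho$ and $C$ — contradicting $\dim \rho =1\ne 2=\dim C$.

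The argument is quite short and I do not anticipate a real obstacle; the only point that requires a moment of care is the final fan-axiom step, which uses the standard fact that if a face of a cone meets the cone's relative interior then it is the whole cone. Note that Proposition~\ref{prop:3dimsmooth} (unimodularity of 2-dimensional cones) is not needed here — only the $\Delta$-maximality definition and Lemma~\ref{lemma:commonface} enter the proof.
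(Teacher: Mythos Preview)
Your proof is correct and follows essentially the same approach as the paper: both apply Lemma~\ref{lemma:commonface} with $k=2$ and rule out the case $v_1+v_2\in\Delta$ by noting that $v_1+v_2$ would then lie in the relative interior of $C$, so its ray cannot coexist with $C$ in a fan while $\Delta$-maximality forces it to be a ray of $\Sigma$. The only difference is cosmetic --- you treat $r=0$ separately and spell out the fan-axiom contradiction, whereas the paper absorbs $r=0$ into the observation that $v_1+v_2$ is nonzero and states the contradiction more tersely.
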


\begin{proof}  By Lemma~\ref{lemma:commonface} for $n=2$, either $v_1$ and $v_2$ are contained in a common proper face of $\Delta$, or $v_1+v_2$ is contained in $\Delta$.  In the second case, we would have that the nonzero lattice point $v_1+v_2$ is in the relative interior of $C$, which is not possible since then the ray over $v_1+v_2$ would not be a ray of $\Sigma$, and $\Sigma$ could not be $\Delta$-maximal.  It follows that $v_1$ and $v_2$ are in a common face $f$ of $\Delta$, and therefore $\Conv(v_1,v_2) \subseteq f \subseteq \partial \Delta$. \end{proof}

\begin{lemma} \label{uniqueness} Suppose that $C = \Cone(v_1, v_2, v_3)$ is a cone in a $\Delta$-maximal fan, with $v_1, v_2, v_3$ lattice points in $\partial \Delta$, not contained in a common facet.  Then: 
\begin{enumerate}
\item There exists a vertex $m$ of $\Delta^*$ such that $\langle m, v_i \rangle = \langle m, v_j \rangle = -1$ for some distinct $i, j \in \{1, 2, 3 \}$, and $\langle m, v_k \rangle = 0$ for the remaining $k \in \{1, 2, 3 \}$.
\item For a fixed choice of $i, j, k$ such that $\{ i, j, k \} = \{1, 2, 3 \}$, there is at most one lattice point $m \in \Delta^*$ such that $\langle m, v_i \rangle = \langle m , v_j \rangle = -1$ and $\langle m, v_k \rangle =0$.
\end{enumerate}  
\end{lemma}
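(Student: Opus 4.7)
The plan is to prove the two parts by different methods.

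For (1), my idea is to apply Lemma~\ref{lemma:commonface} to $v_1 + v_2 + v_3$, which gives $v_1+v_2+v_3 \in r\,\partial\Delta$ for some integer $0 \le r \le 3$. The hypothesis that $v_1,v_2,v_3$ are not in a common facet rules out $r = 3$ (any common face lies in a facet), linear independence rules out $r = 0$, and $\Delta$-maximality rules out $r = 1$: the sum lies in the relative interior of the simplicial cone $C = \Cone(v_1,v_2,v_3)$, where $\Sigma$ has no rays besides $v_1,v_2,v_3$, so if $v_1+v_2+v_3$ were a nonzero lattice point of $\Delta$ it would have to lie on a ray inside the interior of $C$, a contradiction. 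Hence $r = 2$, and any facet $F$ of $\Delta$ with $\langle u_F, v_1+v_2+v_3\rangle = 2$ must have each $\langle u_F, v_i\rangle \in \{0,1\}$ with exactly two of the three equal to $1$. Setting $m = -u_F$ gives the required vertex of $\Delta^*$.

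For (2), suppose $m, m' \in \Delta^*\cap M$ both satisfy the conditions for a fixed ordering, say $(i,j,k) = (1,2,3)$. Then $m - m'$ annihilates $v_1,v_2,v_3$ in the dual pairing. By Proposition~\ref{prop:3dimsmooth}(1), $C$ is unimodular, so $v_1,v_2,v_3$ can be extended to a $\mathbb{Z}$-basis of $N$ by the generator $w$ of an adjacent $4$-dimensional cone $C' = \Cone(v_1,v_2,v_3,w)$ of $\Sigma$; Proposition~\ref{prop:3dimsmooth}(2) ensures $C'$ is unimodular (it is not contained in a maximal cone of $\Sigma(\Delta)$ since $v_1,v_2,v_3$ are not in a common facet), so this yields a basis. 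Writing $m_w$ for the dual basis element, we get $m - m' = k\,m_w$ for an integer $k$, and both $m, m'$ sit on the line $\{-m_1 - m_2 + s\,m_w : s \in \mathbb{R}\}$ in $M_\mathbb{R}$. If $w'$ is the generator of the other adjacent $4$-dimensional cone of $\Sigma$ containing $C$, the unimodularity of both adjacent cones yields the wall relation $w + w' = a_1 v_1 + a_2 v_2 + a_3 v_3$ with integer $a_i$, and the constraints $\langle m, w\rangle, \langle m, w'\rangle \ge -1$ translate into $-1 \le s \le 1 - a_1 - a_2$.

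The hard part is to show that this interval contains at most one integer, equivalently that $a_1 + a_2 \ge 2$. Since $a_1+a_2 = \langle u_F, w+w'\rangle$ for the facet $F$ from (1), and each of $\langle u_F, w\rangle, \langle u_F, w'\rangle$ is at most $1$, this amounts to showing $w, w' \in F$. I would approach this by applying Lemma~\ref{lemma:commonface} to the triples $(v_1, v_2, w)$ and $(v_1, v_2, w')$, which by the same $\Delta$-maximality reasoning as in (1) force sums lying on $2\partial\Delta$, and then identifying the resulting facets with $F$; pinning down this combinatorial identification is the main obstacle to a clean proof.
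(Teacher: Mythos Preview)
Your argument for part~(1) is correct and coincides with the paper's: both use Lemma~\ref{lemma:commonface} to force $v_1+v_2+v_3\in 2\,\partial\Delta$, then read off the required vertex $m=-u_F$.

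For part~(2), however, your approach is genuinely incomplete and diverges from the paper. You correctly reduce the question (in the four-dimensional setting) to the inequality $a_1+a_2\ge 2$ in the wall relation $w+w'=a_1v_1+a_2v_2+a_3v_3$, but you do not prove this, and the route you suggest has two obstacles. First, ``the facet $F$ from~(1)'' need not exist for the \emph{fixed} ordering $(i,j,k)=(1,2,3)$: part~(1) only produces a facet for \emph{some} ordering, so you would first have to argue that a lattice point $m$ as in~(2) forces a genuine facet normal with those values. Second, even granting such an $F$, your proposal to show $w,w'\in F$ via Lemma~\ref{lemma:commonface} applied to $(v_1,v_2,w)$ and $(v_1,v_2,w')$ does not obviously go through: those triples may well lie in a common facet (so no $2\,\partial\Delta$ argument applies), and even when the sum lands on $2\,\partial\Delta$ the resulting facet need not coincide with $F$. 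Without this identification there is nothing preventing $a_1+a_2\in\{0,1\}$, and then your interval $[-1,\,1-a_1-a_2]$ contains two or more integers.

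The paper avoids the wall relation entirely. Given any $m\in\Delta^*\cap M$ with $\langle m,v_1\rangle=\langle m,v_2\rangle=-1$ and $\langle m,v_3\rangle=0$, the triangle $T=\Conv\bigl(v_1,v_2,(v_1+v_2+v_3)/2\bigr)$ lies in the face $\{d\in\Delta:\langle m,d\rangle=-1\}\subseteq\partial\Delta$. The key step is a geometric argument showing that $T$ is \emph{not} contained in any two-dimensional face of $\Delta$: if it were contained in a $2$-face $g$, then the extra vertices of the polygon $g\cap C$ would have to lie in the relative interior of $C$ (using Lemma~\ref{lemma:edges} on the boundary faces $\Cone(v_i,v_3)$) and hence be non-lattice interior points, contradicting that $g$ is a lattice polygon. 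Uniqueness then follows immediately, since two distinct $m,q\in\Delta^*\cap M$ satisfying the conditions would force $T$ into the face $\{d\in\Delta:\langle m,d\rangle=\langle q,d\rangle=-1\}$, which has dimension at most~$2$. This argument uses only the $\Delta$-maximality of the single cone $C$ and does not require passing to adjacent four-dimensional cones.
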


\begin{proof} For part 1, by Lemma~\ref{lemma:commonface}, we must have that either $v_1 +v_2+v_3 \in 2 \cdot \partial \Delta$ or $v_1+v_2+v_3 \in \partial \Delta$.  In the latter case, $C$ could not be a cone in a $\Delta$-maximal fan.  Thus we have $v_1+v_2+v_3 \in 2 \cdot \partial \Delta$, and $(v_1+v_2+v_3)/2$ must be contained in some facet of $\Delta$.  Let $m$ be the vertex of $\Delta^*$ dual to this facet.  We must have that $\langle m,(v_1+v_2+v_3)/2 \rangle =-1$, and since $\langle m,v_1 \rangle$, $\langle m,v_2 \rangle$ and $\langle m,v_3 \rangle$ must be integers $\geq -1$, the only way to achieve this is to have $\langle m,v_i \rangle = \langle m,v_j \rangle = -1$ for exactly two $i, j \in \{ 1,2,3 \}$, and $\langle m,v_k \rangle = 0$ for the remaining $k \in \{ 1, 2, 3 \}$.

For part 2, assume without loss that $i = 1$, $j = 2$, $k = 3$.  Since $m$ evaluates to identically $-1$ on $T=\Conv(v_1, v_2, (v_1+v_2+v_3)/2)$, the triangle $T$ must be contained in the boundary of $\Delta$.

We claim that $T$ cannot be contained in a two-dimensional face of $\Delta$.  Suppose by contradiction that $T \subseteq g$ for some two-dimensional face $g \subseteq \partial \Delta$.  Note that $g \cap C$ is a two-dimensional polygon $\Conv(v_1, v_2, w_1, \dots, w_n)$ for some points $w_1, \dots, w_n \in C$.  If we let $A$ be the two-dimensional affine space spanned by $T$, then by calculation, the intersection of $A$ with the relative boundary of $C$ consists of the line segment $\Conv(v_1, v_2)$ and the two rays $v_1+\mathbb{R}_{\geq 0} v_3$, $v_2+\mathbb{R}_{\geq 0} v_3$.  Because $\Conv(v_1, v_3)$ and $\Conv(v_2,v_3)$ are contained in $\partial \Delta$ by Lemma~\ref{lemma:edges}, the only part of this intersection contained in $\partial \Delta$ is $\Conv(v_1, v_2)$.  

Thus $w_1, \dots, w_n$ must be contained in the relative interior of $C$.  Because $C$ is a cone in a $\Delta$-maximal fan, $w_1, \dots, w_n$ cannot be lattice points.  However, since the $w_i$ are in the relative interior of $C$, they would have to be vertices of the entire face $g$.  This is a contradiction since $g$ is a lattice polytope.

It follows that no two distinct lattice points $m, q \in \Delta^*$ can both evaluate to identically $-1$ on $T$, because otherwise $T$ would be contained in the face 
$$\{ d \in \Delta \ | \ \langle m, d \rangle = \langle q, d \rangle = -1 \}$$
of $\Delta$, which would have to be of dimension $\leq 2$.\end{proof}

We will also need to make use of a generic smoothness result, Lemma 3.1 from \cite{fred3}, which can be proven by a short argument with Sard's theorem.  

\begin{lemma} \label{genericsmth} Let $a_0, \dots, a_r \in \mathbb{C}$, and let $$f = a_0 z^{m_0}+\sum_{i=1}^r a_i z^{m_i}$$ be a regular function on $$(\mathbb{C}^*)^s \times \mathbb{C}^t \cong \hbox{{\rm{Spec}}} \ \mathbb{C}[z^{\pm 1}_1, \dots, z^{\pm 1}_s, z_{s+1}, \dots, z_{s+t}],$$ where $z^{m_i}$ for $0 \leq i \leq r$ are regular Laurent monomials on $(\mathbb{C}^*)^s \times \mathbb{C}^t$ and $z^{m_0}$ is invertible on $(\mathbb{C}^*)^s \times \mathbb{C}^t$.  Then for generic values of $a_0, \dots, a_r$, the affine variety $V(a_0, \dots, a_r) \subseteq (\mathbb{C}^*)^s \times \mathbb{C}^t$ defined by $f=0$ is nonsingular. \end{lemma}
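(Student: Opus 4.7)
The plan is to apply Sard's theorem to the total space of the family. I would form the incidence variety
$$\mathcal{V} = \{(z, a_0, \dots, a_r) \in ((\mathbb{C}^*)^s \times \mathbb{C}^t) \times \mathbb{C}^{r+1} \mid f(z,a) = 0\}$$
together with the projection $\pi : \mathcal{V} \to \mathbb{C}^{r+1}$ onto the coefficient parameters. The fiber of $\pi$ over $(a_0,\dots,a_r)$ is precisely $V(a_0,\dots,a_r)$, so the problem reduces to showing that $\pi$ has a dense open set of regular values.

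First I would verify that $\mathcal{V}$ itself is smooth. Viewing $f$ as a function of all the variables $z_1, \dots, z_{s+t}, a_0, \dots, a_r$, one has
$$\frac{\partial f}{\partial a_0} = z^{m_0},$$
and by hypothesis $z^{m_0}$ is invertible on $(\mathbb{C}^*)^s \times \mathbb{C}^t$, so this partial derivative never vanishes on $\mathcal{V}$. Hence the total differential of $f$ is nonzero at every point of $\mathcal{V}$, and $\mathcal{V}$ is cut out transversally as a smooth hypersurface in $(\mathbb{C}^*)^s \times \mathbb{C}^t \times \mathbb{C}^{r+1}$.

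Next I would apply Sard's theorem (or its algebraic counterpart, generic smoothness) to the morphism $\pi$ of smooth varieties. This gives a Zariski dense open subset $U \subseteq \mathbb{C}^{r+1}$ of regular values; for any $a \in U$, the differential $d\pi$ is surjective at every point of $\pi^{-1}(a)$, so $V(a) = \pi^{-1}(a)$ is smooth by the implicit function theorem (or by dimension count of the fiber of a submersion).

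I do not expect a serious obstacle here: the crucial input is the invertibility of $z^{m_0}$, which guarantees smoothness of the total space, and after that the argument is purely formal. If there is any subtlety, it is in passing from the analytic formulation of Sard's theorem (critical values form a set of measure zero) to the conclusion that the bad locus is contained in a proper Zariski closed subset, for which one invokes the algebraic version of generic smoothness for a dominant morphism of smooth varieties in characteristic zero.
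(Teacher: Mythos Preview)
Your argument is correct and is exactly the ``short argument with Sard's theorem'' the paper alludes to; the paper does not spell out its own proof but cites \cite{fred3}, and the method there is precisely the incidence-variety argument you give, with smoothness of the total space coming from $\partial f/\partial a_0 = z^{m_0} \neq 0$.
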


Eventually we will be able to state a combinatorial condition for generic members of the family $\mathcal{X}(\Sigma)$ to be smooth.  We begin with the following result, which shows that singularities of generic anticanonical hypersurfaces can only occur at certain zero-dimensional torus orbits of $X(\Sigma)$.

\begin{prop} \label{smoothhypersurface} Let $\Delta$ be a reflexive 4-polytope and $\Sigma$ a $\Delta$-maximal fan.  Then: 
\begin{enumerate}
\item If $s$ is a generic global section of the anticanonical bundle of $X(\Sigma)$, then $s = 0$ defines a hypersurface $Y \subseteq X(\Sigma)$ with at worst isolated singularities.  
\item Any isolated singularities of $Y$ must occur at zero-dimensional toric strata of $X(\Sigma)$ corresponding to cones $\Cone(v_1, \dots, v_4)$ with $v_1, \dots, v_4 \in \partial \Delta$ lattice points not contained in a common face of $\Delta$.
\end{enumerate}
\end{prop}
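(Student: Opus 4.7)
The plan is to stratify $X(\Sigma) = \bigsqcup_{C \in \Sigma} T_C$ by its torus orbits and verify smoothness of the generic hypersurface $Y = \{f=0\}$, with $f = \sum_{m \in M \cap \Delta^*} c_m z^m$, on each stratum separately. For a cone $C = \Cone(v_1, \ldots, v_k)$ of dimension $k$, Proposition~\ref{prop:fourdimsmooth} gives that $C$ is unimodular whenever $k \leq 3$, so $U_C \cong \mathbb{C}^k \times (\mathbb{C}^*)^{4-k}$ with $T_C$ cut out by $x_1 = \cdots = x_k = 0$. After the standard trivialization of the anticanonical bundle, each monomial $z^m$ becomes $x_1^{\langle m, v_1\rangle+1} \cdots x_k^{\langle m, v_k\rangle+1}$ times a Laurent monomial in $x_{k+1},\ldots,x_4$, and is therefore invertible on $U_C$ exactly when $\langle m, v_i\rangle = -1$ for all $i=1,\ldots,k$.

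For $k = 0, 1, 2$, Lemma~\ref{lemma:edges} (in the $k=2$ case) shows that $v_1,\ldots,v_k$ lie in a common facet $F$, so $m = -u_F$ supplies an invertible monomial; Lemma~\ref{genericsmth} then yields smoothness of $Y$ on all of $U_C$. The same argument handles $k=3$ when $v_1,v_2,v_3$ lie in a common facet. The essential case is $k=3$ with $v_1,v_2,v_3$ \emph{not} in a common facet: by Lemma~\ref{uniqueness}(1), no $m$ satisfies $\langle m,v_i\rangle = -1$ for all three $i$, so $f$ vanishes identically on $T_C$ and $T_C \subseteq Y$. One must then prove $Y$ is smooth along $T_C$ by computing partials. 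The derivative $\partial f/\partial x_i$ at a point of $T_C$ picks up only monomials with $\langle m, v_i\rangle = 0$ and $\langle m, v_j\rangle = \langle m, v_l\rangle = -1$ for the other two indices; Lemma~\ref{uniqueness}(2) bounds these to at most one $m$ per index, while Lemma~\ref{uniqueness}(1) ensures at least one index $i$ admits such an $m$. Hence, generically, some $\partial f/\partial x_i|_{T_C}$ is a single nonvanishing monomial in $x_4 \in \mathbb{C}^*$, so the Jacobian has rank $1$ everywhere on $T_C$ and $Y$ is smooth along $T_C$.

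Finally, for $k=4$ the orbit $T_C = \{p\}$ is a single point. If $v_1,\ldots,v_4$ lie in a common facet $F$, the section $s_{-u_F}$ has zero-divisor $\sum_\rho (1 - \langle u_F, v_\rho\rangle) D_\rho$ supported only on $D_\rho$ with $v_\rho \notin F$, whereas $p \in D_\rho$ only for $\rho \in \{v_1,\ldots,v_4\} \subseteq F$; thus $s_{-u_F}(p) \neq 0$, so $f(p) \neq 0$ for generic coefficients and $p \notin Y$. If instead $v_1,\ldots,v_4$ do not share a facet, no such argument is available and $p$ may be an isolated singular point of $Y$. Intersecting the finitely many Zariski-open conditions coming from the strata, generically the singular locus of $Y$ is contained in the finite set of points $p = T_C$ for $4$-cones $C$ whose ray generators do not share a facet, which proves both (1) and (2). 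The main obstacle is the $k=3$ non-common-facet subcase above, where one cannot invoke Lemma~\ref{genericsmth} and must rely on the precise monomial-counting provided by Lemma~\ref{uniqueness}.
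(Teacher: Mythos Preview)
Your proof is correct and follows essentially the same route as the paper: stratify by cones, use Lemma~\ref{lemma:edges} plus Lemma~\ref{genericsmth} for $k\le 2$ and for $k=3$ with common facet, use Lemma~\ref{uniqueness} to locate a single linear term giving a nonvanishing partial on $T_C$ in the $k=3$ non-common-facet case, and for $k=4$ with common facet observe that the monomial $z^{-u_F}$ is nonzero at the torus-fixed point. One minor quibble: the fact that no $m\in\Delta^*$ satisfies $\langle m,v_i\rangle=-1$ for all three $i$ is simply the hypothesis ``$v_1,v_2,v_3$ not in a common facet,'' not a consequence of Lemma~\ref{uniqueness}(1).
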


\begin{proof} To prove part 1, we will prove that $Y \cap X(\Sigma^{[3]})$ is smooth.  This is sufficient to show that $Y$ has isolated singularities since $X(\Sigma)\backslash X(\Sigma^{[3]})$ is zero-dimensional. 

First we verify that the smaller set $Y \cap X(\Sigma^{[2]})$ is smooth.  Let $C=\Cone(v_1, v_2)$ be a two-dimensional cone of $\Sigma$ with $v_1, v_2$ lattice points in $\Delta$.  By Lemma~\ref{lemma:edges}, $v_1$ and $v_2$ are contained in some common face of $\Delta$, so there is a lattice point $m \in \Delta^*$ with $\langle m, v_i \rangle = -1$ for $i = 1, 2$.  Considering the generic global section $s$ restricted to the open affine set $U=$ Spec $\mathbb{C}[\breve{C} \cap M] \subseteq X(\Sigma)$, where $\breve{C}$ is the dual cone to $C$, $c_m z^m$ is an invertible monomial term in $s$.  We also have that $C$ is a smooth cone by Proposition~\ref{prop:3dimsmooth}, implying that $U \cong \mathbb{C}^2 \times (\mathbb{C}^*)^2$.  So by Lemma~\ref{genericsmth}, $Y \cap U$ is smooth.

Now let $C = \Cone(v_1, v_2, v_3)$ be a three-dimensional cone of $\Sigma$ with $v_1, v_2, v_3$ lattice points in $\Delta$ and $U \subseteq X(\Sigma)$ the corresponding open affine.  By Proposition~\ref{prop:3dimsmooth}, $C$ is a smooth cone, and $U \cong \mathbb{C}^3 \times \mathbb{C}^*$.  First assume that $v_1, v_2, v_3$ are contained in a common face.  Then the same argument as in the previous paragraph shows $Y \cap U$ is smooth.

Now suppose that $v_1, v_2, v_3$ are not contained in a common face.  We still have that $U \cong \mathbb{C}^3 \times \mathbb{C}^*$.  In this case, the defining equation of $Y \cap U$ will not have an invertible term as above, but it will have a linear term, which we now show will be enough to imply smoothness.  

Since $C$ is a smooth cone by Proposition~\ref{prop:3dimsmooth}, we can extend $v_1, v_2, v_3$ to a $\mathbb{Z}$-basis of $N$ by adding a vector $u_4 \in N$.  Let the basis of $M$ dual to $\{ v_1, v_2, v_3, u_4 \}$ be $\{v'_1, v'_2, v'_3, u'_4\}$.  Then the dual cone $\breve{C}$ is equal to $\Cone(v'_1, v'_2, v'_3, u'_4, -u'_4)$, and the ring of functions on $U$ is $\mathbb{C}[\breve{C} \cap M] = \mathbb{C}[z^{v'_1}, z^{v'_2},z^{v'_3},z^{u'_4},z^{-u'_4}]$.    

By part 1 of Lemma \ref{uniqueness}, up to a relabeling of $v_1, v_2, v_3$, there exists a lattice point $m \in \Delta^*$ such that $\langle m, v_1 \rangle = \langle m, v_2 \rangle = -1$ and $\langle m, v_3 \rangle = 0$.  On $U$, the lattice point $m$ corresponds to the monomial $z^{v'_3}z^{au'_4}$ where $a = 1+\langle m, u_4 \rangle$.  By part 2 of Lemma \ref{uniqueness}, all other lattice points $\ell \in \Delta^*$ must correspond to monomials $z^{c_1v'_1}z^{c_2v'_2}z^{c_3v'_3}z^{c_4u'_4}$ on $U$, where $c_1, c_2, c_3$ are non-negative integers, and either $c_1+c_2 \geq 1$ or $c_3 \geq 2$.  (This is because $c_i = 1+\langle \ell, v_i \rangle$ for $i = 1, 2, 3$.) 

Let $p \in \mathbb{C}[z^{v'_1}, z^{v'_2},z^{v'_3},z^{u'_4},z^{-u'_4}]$ be a local representative of the section $s$ on $U$.  Let the coefficient of $p$ on the monomial $z^{v'_3}z^{au'_4}$ be $A \in \mathbb{C}$.  Then the partial of $p$ with respect to $z^{v'_3}$ is $$Q=Az^{au'_4}+q(z^{v'_1}, z^{v'_2},z^{v'_3},z^{u'_4},z^{-u'_4})$$ where $q$ is such that $q(0,0,0,z^{u'_4},z^{-u'_4}) = 0$. As $z^{au'_4}$ is invertible on $U$, we can conclude that $Q \neq 0$ when $z^{v'_1} = z^{v'_2} = z^{v'_3} =0$ for generic values of $A$, and therefore the zero locus of $p$ is nonsingular at the set of points in $U$ where $z^{v'_1} = z^{v'_2} = z^{v'_3} =0$.  All other points of $U$ are contained in $X(\Sigma^{[2]})$, where we have already shown $Y$ is smooth.

For part 2, we have established that any singularities of $Y$ must occur in the set $X(\Sigma) \backslash X(\Sigma^{[3]})$, which is the set of zero-dimensional torus orbits in $X(\Sigma)$, corresponding to maximal cones of $\Sigma$. Suppose $C = \Cone(v_1, \dots, v_4)$ is a maximal cone and the $v_i$ are all contained in some common facet of $\Delta$, with dual vertex $m$.  If $U$ is the open affine subset of $X(\Sigma)$ defined by $C$, $Y$ will not contain the unique zero-dimensional torus orbit $t \in U$.  This is because at $t$, the monomial $z^m$ in the defining equation of $Y$ is nonzero while all others vanish.  It follows that $Y \cap U \subseteq X(\Sigma^{[3]})$ is smooth, and any singularities of $Y$ must occur on an open affine corresponding to $\Cone(v_1, \dots , v_4)$ with $v_1, \dots, v_4$ not in a common face of $\Delta$. \end{proof}

Having established Proposition~\ref{smoothhypersurface}, we can now give a sufficient condition for determining when generic members of $\mathcal{X}(\Sigma)$ are smooth, which depends only on the geometry of $\Delta$ and the fan $\Sigma$.  We need only check for smoothness at the remaining zero-dimensional torus orbits specified by part 2 of Proposition~\ref{smoothhypersurface}.  The idea behind the following sufficient condition is that it will ensure the local definining equation of $\mathcal{X}(\Sigma)$ at these points has a lowest term that is linear, and therefore has a nonvanishing partial derivative. 

\begin{prop} \label{smoothness} Let $\Delta$ be a reflexive 4-polytope and $\Sigma$ a $\Delta$-maximal fan.  Then a sufficient condition for generic members of the family $\mathcal{X}(\Sigma)$ to be smooth is as follows: for every maximal cone $\Cone(v_1, v_2, v_3, v_4) \in \Sigma$ with $v_1, \dots, v_4$ lattice points of $\Delta$ that are not contained in a common proper face of $\Delta$, $$v_1+\cdots+v_4 \in 3 \cdot \partial \Delta.$$ \end{prop}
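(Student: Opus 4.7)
The plan is to combine Proposition~\ref{smoothhypersurface} with the unimodularity from Proposition~\ref{prop:fourdimsmooth}(2), then produce a nonzero linear term in the local defining equation of $Y$ at each potentially singular fixed point. By Proposition~\ref{smoothhypersurface}, a generic $Y \in \mathcal{X}(\Sigma)$ can fail to be smooth only at zero-dimensional toric strata corresponding to maximal cones $C = \Cone(v_1, v_2, v_3, v_4)$ whose ray generators do not lie in a common facet of $\Delta$. Since there are only finitely many such cones, and ``generic'' is preserved under finite intersections of Zariski-open conditions on the coefficients $(c_m)$, it suffices to prove smoothness at one such point $t$ for generic parameters.

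Fix such a cone $C$. By Proposition~\ref{prop:fourdimsmooth}(2), $C$ is unimodular, so the affine chart $U = \mathrm{Spec}\,\mathbb{C}[\breve{C} \cap M] \subseteq X(\Sigma)$ is isomorphic to $\mathbb{C}^4$ with coordinates $x_i = z^{v'_i}$, where $v'_1, \ldots, v'_4 \in M$ is the basis dual to $v_1, \ldots, v_4$; in these coordinates $t$ is the origin. A generic anticanonical section $s = \sum_{m \in \Delta^* \cap M} c_m z^m$ converts to a regular function on $U$ upon multiplying by $z^{v'_1+\cdots+v'_4} = x_1 x_2 x_3 x_4$, giving
\[
p(x_1, x_2, x_3, x_4) \;=\; \sum_{m \in \Delta^* \cap M} c_m \prod_{i=1}^{4} x_i^{\langle m, v_i\rangle + 1},
\]
whose exponents are non-negative because $m \in \Delta^*$. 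To conclude smoothness of $Y = \{p = 0\}$ at the origin, it is enough to exhibit, for generic $(c_m)$, a nonzero linear term.

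A monomial $c_m \prod_i x_i^{\langle m,v_i\rangle + 1}$ is linear precisely when $\sum_i(\langle m,v_i\rangle + 1) = \langle m, v_1+v_2+v_3+v_4\rangle + 4 = 1$, equivalently $\langle m, v_1+v_2+v_3+v_4\rangle = -3$. By the hypothesis $v_1+\cdots+v_4 \in 3\cdot \partial \Delta$, there is a facet $F$ of $\Delta$ containing $(v_1+\cdots+v_4)/3$; the dual vertex $m^{*} \in \Delta^* \cap M$ then satisfies $\langle m^{*}, v_1+\cdots+v_4\rangle = -3$. Since each $\langle m^{*}, v_i\rangle \geq -1$ is an integer, exactly three of these equal $-1$ and one, say $\langle m^{*}, v_{i_0}\rangle$, equals $0$, so the corresponding monomial in $p$ is $c_{m^{*}} x_{i_0}$. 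For generic coefficients $c_{m^{*}} \neq 0$, whence $\partial p/\partial x_{i_0}(t) = c_{m^{*}} \neq 0$ and $Y$ is smooth at $t$.

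I do not expect a serious obstacle. The key input is the unimodularity of the bad $4$-cone $C$ from Proposition~\ref{prop:fourdimsmooth}, which reduces everything to an explicit monomial calculation in standard coordinates on $\mathbb{C}^4$; the hypothesis $v_1+\cdots+v_4 \in 3\cdot \partial \Delta$ is tailored exactly to produce a lattice point of $\Delta^*$ whose local monomial contribution is linear in these coordinates.
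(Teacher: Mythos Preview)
Your argument is correct and follows essentially the same route as the paper: reduce via Proposition~\ref{smoothhypersurface} to the torus-fixed points of the ``bad'' maximal cones, use Proposition~\ref{prop:fourdimsmooth}(2) to identify the local chart with $\mathbb{C}^4$, and then observe that the hypothesis $v_1+\cdots+v_4\in 3\cdot\partial\Delta$ forces a dual vertex $m^*\in\Delta^*$ whose contribution to the local equation is a single coordinate $x_{i_0}$, giving a nonvanishing partial derivative at the origin. The paper's proof is the same computation, phrased slightly more tersely (and it additionally remarks that $p$ has no constant term, which you do not need since $\partial p/\partial x_{i_0}(0)=c_{m^*}\neq 0$ already suffices).
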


\begin{proof} Assume the condition holds for a cone $C = \Cone(v_1, \dots, v_4)$.  Then $(v_1+\cdots+v_4)/3 \in \partial \Delta$, so it is contained in some maximal proper face $f \subseteq \Delta$ with dual vertex $m \in \Delta^*$.  To have $\langle m, (v_1+\cdots+v_4)/3 \rangle = -1$ we must have $\langle m, v_i \rangle = \langle m, v_j \rangle = \langle m, v_k \rangle = -1$ for exactly three $v_i, v_j, v_k$ and $\langle m, v_\ell \rangle = 0$ for the remaining $v_\ell$.  Assume $i = 1, j = 2, k = 3, \ell = 4$.  

By Proposition~\ref{prop:fourdimsmooth}, we know that $\{ v_1, \dots, v_4 \}$ is a $\mathbb{Z}$-basis of $N$.  If $v'_i$ for $1 \leq i \leq 4$ is the dual basis, then the affine open subset of $X(\Sigma)$ corresponding to $C$ is $U =$ Spec $\mathbb{C}[\breve{C} \cap M] =$ Spec $\mathbb{C}[z^{v'_1}, \dots, z^{v'_4}] \cong \mathbb{C}^4$.  On $U$, $m$ corresponds to the monomial $z^{v'_4}$.  Therefore a local representative of a generic global section of the anticanonical bundle on $U$, $p(z^{v'_1}, \dots, z^{v'_4})$, will have a linear term in $z^{v'_4}$, and no constant term.  It follows immediately that the partial of $p$ with respect to $z^{v'_4}$ will be nonvanishing at the point $z^{v'_1} = \cdots = z^{v'_4} = 0$ (the unique zero-dimensional torus orbit in $U$), implying that generic members of $\mathcal{X}(\Sigma)$ are smooth at this point.
\end{proof}

Based on Proposition~\ref{smoothness}, it is natural to introduce the following condition on a reflexive four-polytope $\Delta$ that will guarantee smoothness of the anticanonical hypersurface families in all $\Delta$-maximal fans.

\begin{definition} \label{maximalcones} Let $\Delta$ be a reflexive four-polytope.  We say that $C$ is a \emph{maximal cone} associated to $\Delta$ if $C = \Cone(v_1, \dots, v_4)$ for linearly independent lattice points $v_1, \dots, v_4 \in \Delta$, and $\Cone(v_1, \dots, v_4) \cap \Delta \cap N = \{0, v_1, \dots, v_4 \}$.  We say that $C$ is a {\it good maximal cone} if either $$v_1+\cdots+v_4 \in 3 \cdot \partial \Delta$$ or $$v_1+\cdots+v_4 \in 4 \cdot \partial \Delta.$$
We say $\Delta$ \emph{has good maximal cones} if all maximal cones associated to $\Delta$ are good.  We say a $\Delta$-maximal fan $\Sigma$ has good maximal cones if all four-dimensional cones of $\Sigma$ are good.
\end{definition} 

\section{Proof of main result via wall crossings}

In this section, we prove our main result, which is that any reflexive four-polytope $\Delta$ has good maximal cones, and thus any $\Delta$-maximal fan $\Sigma$ is associated to an anticanonical hypersurface family $\mathcal{X}(\Sigma)$ whose generic members are smooth.  Before proceeding to the proof, we first introduce necessary ideas from the theory of the secondary fan.

\subsection{The Secondary Fan}

Roughly speaking, the \emph{secondary fan} associated to a list of rational vectors $\nu_1, \dots, \nu_r \in N_\mathbb{R}$ is a way to parametrize certain fans in $N_\mathbb{R}$ with cones generated by $\nu_1, \dots, \nu_r$.  Throughout our discussion, we will assume that $\nu_1, \dots, \nu_r$ span $N_\mathbb{R}$.  We also will always assume that $\nu_1, \dots, \nu_r$ is ``geometric'' as
defined in \cite{CLS}, Section 15.1, meaning that all $\nu_i$ are nonzero and no $\nu_i$ is a positive scalar multiple of another.  We will mostly use the same notation and approach as \cite{CLS}, Chapters 14 and 15.  Other sources on the secondary fan (also known as the GKZ decompsition for its discoverers Gel'fand, Kapranov, and Zelevinsky) include \cite{coxkatz, odapark, GKZbook}.

The secondary fan consists of cones associated to certain pairs $(\Sigma, I_\emptyset)$, where $I_\emptyset \subseteq \{ 1, \dots, r \}$, and $\Sigma$ is a ``generalized fan'' in $N_\mathbb{R}$.  A generalized fan has the same definition as a fan (a finite set of rational convex polyhedral cones in $N_\mathbb{R}$ such that if $C_1, C_2 \in \Sigma$, $C_1 \cap C_2 \in \Sigma$, and if $C'$ is a face of $C \in \Sigma$, then $C' \in \Sigma$), but the cones are
not required to be pointed; in other words, they may contain a nontrivial linear subspace of $N_\mathbb{R}$.  A generalized fan which is a fan in the usual sense is called nondegenerate. Otherwise, it is said to be degenerate.

The \emph{support} $|\Sigma|$ of a generalized fan $\Sigma$ is the set of points in $N_\mathbb{R}$ contained in some cone of $\Sigma$.  A \emph{support function} for $\Sigma$ is a continuous piecewise linear function $\varphi : |\Sigma| \rightarrow \mathbb{R}$, such that the restriction of $\varphi$ to each cone of $\Sigma$ is linear.  A support function $\varphi: |\Sigma| \rightarrow \mathbb{R}$ is said to be \emph{strictly convex} if it is convex and its maximal domains of linearity are exactly the maximal cones of $\Sigma$.

The following definition is the same as \cite{CLS}, Definition 14.4.2.

\begin{definition} \label{gkzdef} Suppose we have a spanning geometric set of rational vectors $\nu_1, \dots \nu_r \in N_\mathbb{R}$. Consider pairs $(\Sigma, I_\emptyset)$ satisfying the following:
\begin{enumerate}
\item $\Sigma$ is a generalized fan in $N_\mathbb{R}$ and $I_\emptyset \subseteq \{1, \dots, r \}$.
\item The support of $\Sigma$, $|\Sigma|$, equals $\Cone(\nu_1, \dots, \nu_r)$.
\item There exists a strictly convex support function for $\Sigma$.
\item For each $\sigma \in \Sigma$, we have that $\sigma = \Cone(\nu_i \ | \ \nu_i \in \sigma, i \not \in I_\emptyset).$
\end{enumerate}
Then the \emph{GKZ cone} $\widetilde{\Gamma}_{\Sigma,I_\emptyset} \subseteq \mathbb{R}^r$ is defined as 
\begin{gather*}
\widetilde{\Gamma}_{\Sigma,I_\emptyset} = \{ (a_1, \dots, a_r) \in \mathbb{R}^r \ | \  \hbox{there is a convex piecewise linear support function} \\ \varphi: |\Sigma| \rightarrow \mathbb{R}
\ \hbox{such that} \ \varphi(\nu_i) = -a_i \ \hbox{for} \ i \not \in I_\emptyset \ \hbox{and} \ \varphi(\nu_i) \geq -a_i \ \hbox{for} \ i \in I_\emptyset \}.
\end{gather*}
\end{definition}

The results of \cite{CLS}, Section 14.4, show that if we consider all possible pairs $(\Sigma, I_\emptyset)$ allowed by Definition \ref{gkzdef}, then the GKZ cones $\widetilde{\Gamma}_{\Sigma,I_\emptyset}$ fit together to form a generalized fan in $\mathbb{R}^r$, called $\widetilde{\Sigma}_{\rm{GKZ}}$.  The minimal cone of $\widetilde{\Sigma}_{\rm{GKZ}}$ (the intersection of all its cones) is the linear subspace $L \subseteq \mathbb{R}^r$ defined by
$$L = \{ (\ell(\nu_1), \dots, \ell(\nu_r)) \ | \ \ell : N_\mathbb{R} \rightarrow \mathbb{R} \hbox{ is a linear map} \}.$$
The notation used by \cite{CLS} for the quotient space $\mathbb{R}^r/L$ is $\widehat{G}_\mathbb{R}$.  The image of each GKZ cone in $\widehat{G}_\mathbb{R}$ will be a pointed cone.  Thus, we get a nondegenerate fan $\Sigma_{\rm{GKZ}}$ in $\widehat{G}_\mathbb{R}$.  
By definition, $\Sigma_{\rm{GKZ}}$ is the secondary fan associated to the vectors $\nu_1, \dots, \nu_r$.  We use the notation $\Gamma_{\Sigma, I_\emptyset}$ for the cone in $\Sigma_{\rm{GKZ}}$ which is the image of $\widetilde{\Gamma}_{\Sigma,I_\emptyset}$.

We have the following important results about the structure of $\Sigma_{\rm{GKZ}}$ (see \cite{CLS}, Theorem 14.4.7 and Proposition 14.4.9).
\begin{prop} \label{gkzproperties}
Let $\nu_1, \dots, \nu_r$ be a spanning geometric set of rational vectors in $N_\mathbb{R}$ and let $\Sigma_{\rm{GKZ}}$ be the corresponding secondary fan in $\widehat{G}_\mathbb{R}$.  Then:
\begin{enumerate}
\item The support of $\Sigma_{\rm{GKZ}}$ is convex and full-dimensional in $\widehat{G}_\mathbb{R}$.
\item Every face of a cone $\Gamma_{\Sigma, I_\emptyset}$ is a cone of the form $\Gamma_{\Sigma', I'_\emptyset}$ for some pair $(\Sigma', I'_\emptyset)$.
\item $\Gamma_{\Sigma', I'_\emptyset}$ is a face of $\Gamma_{\Sigma, I_\emptyset}$ if and only if $\Sigma$ refines $\Sigma'$ and $I'_\emptyset \subseteq I_\emptyset$.
\item $\Gamma_{\Sigma, I_\emptyset}$ is a maximal cone of $\Sigma_{\rm{GKZ}}$ if and only if $\Sigma$ is simplicial and the rays of $\Sigma$ are exactly the rays over $\nu_i$ for $i \not \in 
I_\emptyset$.
\end{enumerate}
\end{prop}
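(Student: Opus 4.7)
The central tool will be the convex lower envelope construction. Given any $a = (a_1, \ldots, a_r) \in \mathbb{R}^r$, I form the lifted configuration $\{(\nu_i, -a_i)\} \subseteq N_\mathbb{R} \oplus \mathbb{R}$ and define $\varphi_a \colon \Cone(\nu_1, \ldots, \nu_r) \to \mathbb{R} \cup \{-\infty\}$ as the largest convex function satisfying $\varphi_a(\nu_i) \leq -a_i$, that is, the projection of the lower boundary of the convex hull of the lifts. Finiteness of $\varphi_a$ is a linear condition on $a$ (by LP duality, $\sum \mu_i a_i \leq 0$ for every $\mu \geq 0$ with $\sum \mu_i \nu_i = 0$), so the set of admissible $a$ is a closed convex cone. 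When $\varphi_a$ is finite, its maximal domains of linearity assemble into a generalized fan $\Sigma(a)$ with $|\Sigma(a)| = \Cone(\nu_1, \ldots, \nu_r)$, and $I_\emptyset(a) := \{i : \varphi_a(\nu_i) < -a_i\}$ records the lifts lying strictly above the hull. A direct check will show $a \in \widetilde{\Gamma}_{\Sigma(a), I_\emptyset(a)}$ and that the map $a \mapsto (\Sigma(a), I_\emptyset(a))$ stratifies the support of $\widetilde{\Sigma}_{\rm{GKZ}}$.

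Part (1) then follows: convexity of the support is immediate from its inequality description, and full-dimensionality is exhibited by choosing any simplicial triangulation of $\Cone(\nu_1, \ldots, \nu_r)$ whose rays are among the $\nu_i$, together with generic heights realizing it as a coherent triangulation.

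For parts (2) and (3), I will explicitly identify the defining inequalities of $\widetilde{\Gamma}_{\Sigma, I_\emptyset}$. They fall into two families: for each codimension-one wall $\tau = \sigma \cap \sigma'$ between adjacent maximal cones of $\Sigma$, a wall-convexity inequality (linear in the relevant $a_i$ via the Gale relation among the rays of $\sigma \cup \sigma'$), and for each $i \in I_\emptyset$, the linear inequality $\varphi_a(\nu_i) \geq -a_i$. I will then argue facet-by-facet: saturating a wall inequality fuses $\sigma$ and $\sigma'$ into a single non-simplicial cone of a coarsening $\Sigma'$ of $\Sigma$, while saturating an activation inequality drops some $\nu_i$ onto the hull, giving $I'_\emptyset = I_\emptyset \setminus \{i\}$. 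In both cases one obtains $\Sigma$ refining $\Sigma'$ and $I'_\emptyset \subseteq I_\emptyset$, proving (2) for facets and (3) in codimension one; iterating handles all faces simultaneously.

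For part (4), a dimension count finishes the argument. A strictly convex PL support function on a simplicial $\Sigma$ is freely determined by its values on the rays, contributing one parameter per ray, while the indices in $I_\emptyset$ carry free heights $a_i$ subject only to $\varphi_a(\nu_i) \geq -a_i$, contributing $|I_\emptyset|$ more; together these yield exactly $r$ parameters, so $\dim \widetilde{\Gamma}_{\Sigma, I_\emptyset} = r$. Non-simplicial cones or redundant generators (extra $\nu_i$ lying strictly inside a cone) impose additional linear relations on $\varphi$, cutting the dimension below $r$. The main obstacle I anticipate is the converse realization step woven through parts (2)--(4): given any pair $(\Sigma, I_\emptyset)$ satisfying Definition \ref{gkzdef}(1)--(4), one must actually produce an admissible $a$ with $\Sigma(a) = \Sigma$ and $I_\emptyset(a) = I_\emptyset$. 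This requires building a strictly convex PL support function with exactly the prescribed breaking along walls and the prescribed gaps $\varphi(\nu_i) < -a_i$ for $i \in I_\emptyset$, which is typically handled by a perturbation-and-induction argument over the number of extra points and walls, and it is the only step where a nontrivial geometric construction replaces a purely linear-algebraic verification.
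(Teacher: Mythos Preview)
The paper does not supply a proof of this proposition; it is stated as background and cited directly from \cite{CLS}, Theorem 14.4.7 and Proposition 14.4.9. Your sketch via the lower-envelope construction---lift the $\nu_i$ to heights $-a_i$, read off the regular subdivision $\Sigma(a)$ and the non-tight index set $I_\emptyset(a)$, then analyze wall and activation inequalities and finish with a dimension count---is essentially the argument carried out in \cite{CLS}, so there is nothing in the present paper to compare against beyond that reference. One minor wording issue: in part (1) you write ``choosing any simplicial triangulation \ldots\ together with generic heights realizing it as a coherent triangulation,'' but not every triangulation is coherent; what you actually want (and what generic heights already deliver) is the existence of \emph{some} regular simplicial subdivision, which suffices for full-dimensionality.
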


From here on, $\nu_1, \dots, \nu_r$ will always be the elements of $\partial \Delta \cap N$, where $\Delta \subseteq N_\mathbb{R}$ is some fixed reflexive polytope.  This implies that $\Cone(\nu_1, \dots, \nu_r) = N_\mathbb{R}$.  Therefore, all generalized fans $\Sigma$ defining a cone $\Gamma_{\Sigma, I_\emptyset} \in \Sigma_{\rm{GKZ}}$ will have support equal to $N_\mathbb{R}$.  We also have that $\nu_1, \dots, \nu_r$ is ``primitive geometric'', meaning that $\nu_1, \dots, \nu_r$ is geometric, along with the extra condition that each $\nu_i$ is a primitive element of $N$.  (An element $n \in N$ is primitive if it is not a positive integer multiple of some other element of $N$.)

The \emph{moving cone} of the secondary fan, $\rm{Mov}_{\rm{GKZ}}$, is defined as the union of all cones $\Gamma_{\Sigma, I_\emptyset} \in \Sigma_{\rm{GKZ}}$ where $I_\emptyset$ is the empty set.  Since $\nu_1, \dots, \nu_r$ is geometric, Propositions 15.1.4 and 15.1.5 of \cite{CLS} say that $\rm{Mov}_{\rm{GKZ}}$ is a full-dimensional convex polyhedral cone in $\widehat{G}_\mathbb{R}$.

In our case, it follows from the facts of Proposition \ref{gkzproperties} that maximal cones in the moving cone of the secondary fan correspond to complete projective simplicial fans whose rays are equal to the rays over the elements of $\partial \Delta \cap N$, in other words, projective $\Delta$-maximal fans.

\subsection{Wall Crossings and Circuits}

To prove our main result we will use the theory of \emph{wall crossings} in the secondary fan, as explained in \cite{CLS}, Section 15.3.  Suppose we have distinct maximal cones $\Gamma_{\Sigma, \emptyset}$, $\Gamma_{\Sigma',\emptyset}$ in the moving cone that share a common codimension-1 face $\Gamma_{\Sigma_0, \emptyset}$. Then $\Sigma$ and $\Sigma'$ are said to be related by a wall crossing, and $\Gamma_{\Sigma_0, \emptyset}$ is called a ``flipping wall''.
$\Sigma$ and $\Sigma'$ are both subdivisions of $\Sigma_0$, which is non-simplicial.  In our case, $\Sigma$ and $\Sigma'$ are both projective $\Delta$-maximal fans.  It also follows from \cite{CLS}, Proposition 14.4.12, part b, that $\Sigma_0$ is a nondegenerate fan.

It turns out that a wall crossing is described by the data of an \emph{oriented circuit} in $N_\mathbb{R}$, which we now define.

Let $n = \hbox{dim} \ N_\mathbb{R}$.  Suppose we have a set of $n+1$ nonzero vectors $W = \{ w_1, \dots, w_{n+1} \} \subseteq N_\mathbb{R}$ which span $N_\mathbb{R}$.  For our purposes we can assume that all $w_1, \dots, w_{n+1}$ are elements of $N$.  There is a linear dependence equation 
$$\sum_{i=1}^{n+1} b_i w_i = 0$$
which is unique up to rescaling, with the $b_i$ integers not all equal to zero.  Set $W_+$ to be the set of $w_i$ with $b_i >0$, $W_0$ the set of $w_i$ with $b_i = 0$, and $W_-$ the set of $W_i$ with $b_i <0$.  

The set of vectors $W_+ \cup W_-$ has the property that it is linearly dependent, but any proper subset is linearly independent.  A set of vectors with this property is called a circuit.  Rescaling the dependence equation will at most swap $W_+$ and $W_-$.  A fixed choice of $W_-$ and $W_+$ makes $W$ an oriented circuit.

Define the sets of simplicial cones $\Sigma_-$ and $\Sigma_+$ as 
\begin{align*}
\Sigma_- &= \{ \sigma \ | \ \sigma \hbox{ is a face of } \Cone(w_1, \dots, \widehat{w}_i, \dots, w_{n+1}) \hbox{ for some } w_i \in W_+  \} \\
\Sigma_+ &= \{ \sigma \ | \ \sigma \hbox{ is a face of } \Cone(w_1, \dots, \widehat{w}_i, \dots, w_{n+1}) \hbox{ for some } w_i \in W_-  \},
\end{align*} 
where $\widehat{w}_i$ indicates that $w_i$ is omitted.  Then we have the following result (Lemma 15.3.10, \cite{CLS}):

\begin{lemma} \label{circuitcones} Suppose that $C = \Cone(w_1, \dots, w_{n+1})$ is pointed.  Then $W_-$ and $W_+$ are both nonempty, and $\Sigma_-$ and $\Sigma_+$ are simplicial fans with support $C$. \end{lemma}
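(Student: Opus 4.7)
My plan is to prove the three assertions in order — nonemptiness of $W_\pm$, simpliciality of each maximal cone, and the fan/support claims — all by bookkeeping with the single defining relation $\sum_{i=1}^{n+1} b_i w_i = 0$.

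First I would dispatch nonemptiness. If $W_- = \emptyset$, then the relation reads $\sum_{i \in W_+} b_i w_i = 0$ with every $b_i > 0$, so picking any $j \in W_+$ gives $-w_j = b_j^{-1}\sum_{i\in W_+,\, i\neq j} b_i w_i \in C$. Since $w_j \in C$ as well, $C$ contains the line $\mathbb{R}\cdot w_j$, contradicting pointedness of $C$; symmetry gives $W_+ \neq \emptyset$. For simpliciality, fix $w_i \in W_+$ and consider the $n$ vectors $\{w_k : k \neq i\}$. Any linear relation among these extends to a relation on the entire set with coefficient $0$ on $w_i$, which by uniqueness (up to scalar) of $\sum b_k w_k = 0$ must be a scalar multiple of the defining relation; since $b_i \neq 0$, the scalar vanishes. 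Hence $\{w_k : k \neq i\}$ is a basis of $N_\mathbb{R}$, and $\Cone(w_k : k \neq i)$ is simplicial of dimension $n$. The argument for $\Sigma_+$ is the same with $W_-$ replacing $W_+$.

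Next I would prove $|\Sigma_-| = C$. Inclusion $|\Sigma_-| \subseteq C$ is immediate. Conversely, given $x \in C$ write $x = \sum_k a_k w_k$ with $a_k \geq 0$. For any $t \in \mathbb{R}$, $x = \sum_k (a_k + t b_k) w_k$. Set $t_0 = \max_{k \in W_+}(-a_k/b_k)$, and say the maximum is attained at $j \in W_+$. Since $a_k \geq 0$ and $b_k > 0$ on $W_+$, one has $t_0 \leq 0$. Then the coefficient on $w_j$ becomes zero, the coefficients on $W_+\setminus\{j\}$ remain $\geq 0$ by the maximal choice of $t_0$, those on $W_-$ remain $\geq 0$ because $t_0 \leq 0$ and $b_k < 0$ yield $t_0 b_k \geq 0$, and those on $W_0$ are unchanged. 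Thus $x$ lies in the maximal cone of $\Sigma_-$ obtained by omitting $w_j$.

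Finally I would verify the fan axioms. Closure under taking faces is built into the definition of $\Sigma_-$, and for intersections it suffices to check two maximal cones $\sigma_i$ and $\sigma_j$ with distinct $i,j \in W_+$ (the general case reduces to this). If $x \in \sigma_i \cap \sigma_j$, write $x = \sum_{k \neq i} c_k w_k = \sum_{k \neq j} d_k w_k$ with nonnegative coefficients and the conventions $c_i = d_j = 0$. Subtraction gives $\sum_k (c_k - d_k) w_k = 0$, and by uniqueness of the defining relation up to scalar there is a $\lambda$ with $c_k - d_k = \lambda b_k$ for every $k$. Evaluating at $k=i$ yields $-d_i = \lambda b_i$ with $d_i \geq 0$ and $b_i > 0$, so $\lambda \leq 0$; evaluating at $k=j$ yields $c_j = \lambda b_j$ with $c_j \geq 0$ and $b_j > 0$, so $\lambda \geq 0$. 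Hence $\lambda = 0$, $c_k = d_k$ for all $k$, and in particular the coefficients of both $w_i$ and $w_j$ vanish, so $x \in \Cone(w_k : k \neq i, j)$, which is a common face of $\sigma_i$ and $\sigma_j$. The argument for $\Sigma_+$ is symmetric. The main obstacle throughout is purely bookkeeping with signs — choosing $t_0$ correctly in the support step and pinning down $\lambda = 0$ in the intersection step — once those inequalities are tracked, everything reduces to the uniqueness of the single circuit relation.
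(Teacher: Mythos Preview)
Your proof is correct. Each of the three steps---nonemptiness of $W_\pm$, the support computation via the shift $t_0$, and the intersection argument pinning down $\lambda=0$---is sound, and the reduction of the general fan-intersection axiom to the case of two maximal cones is a standard fact (in this simplicial setting it also follows directly by the same uniqueness argument applied to faces $\Cone(S_1)$ and $\Cone(S_2)$).

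As for comparison: the paper does not prove this lemma at all---it simply quotes it as Lemma~15.3.10 of \cite{CLS}. Your argument is essentially the standard direct proof one finds there, so there is no meaningful difference in approach to discuss; you have supplied what the paper chose to cite.
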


We can now describe how the main result on wall crossings in the secondary fan (Theorem 15.3.13, \cite{CLS}) applies to our situation of interest.  Let $\nu_1, \dots, \nu_r$ be the elements of $\partial \Delta \cap N$ and let $\Sigma_{\rm{GKZ}}$ be the associated secondary fan.  For a set $S \subseteq \{1, \dots, r \}$, define $\sigma_S = \Cone(\nu_i \ | \ i \in S)$.  Also, given any fan $\Sigma$ in $N_\mathbb{R}$ and a cone $C \subseteq N_\mathbb{R}$, define $\Sigma|_C$ as  $\{ \sigma \ | \ \sigma \in \Sigma, \sigma \subseteq C \}$, a subfan of $\Sigma$.

\begin{thm} \label{wallcrossingsthm}  Let $\Sigma$ and $\Sigma'$ be projective $\Delta$-maximal fans such that $\Gamma_{\Sigma, \emptyset}$ and $\Gamma_{\Sigma', \emptyset}$ share a common codimension-1 face $\Gamma_{\Sigma_0, \emptyset}$ in $\Sigma_{\rm{GKZ}}$.  Then $\Sigma$ and $\Sigma'$ are refinements of the nondegenerate complete fan $\Sigma_0$ which is not simplicial.  The set of rays of $\Sigma_0$ is equal to the set of rays over $\nu_1, \dots, \nu_r$.  There are disjoint sets $J_-, J_+ \subseteq \{1, \dots, r \}$ such that we have an equation
$$\sum_{i \in J_+} b_i \nu_i + \sum_{i \in J_-} b_i \nu_i = 0$$
where the $b_i$ are integers such that $b_i < 0$ if $i \in J_-$ and $b_i > 0$ if $i \in J_+$.  

We have the following facts: 
\begin{enumerate}
\item $\sigma_{J_- \cup J_+} \in \Sigma_0$, and either $\sigma_{J_+} \in \Sigma$ and $\sigma_{J_-} \in \Sigma'$, or $\sigma_{J_+} \in \Sigma'$ and $\sigma_{J_-} \in \Sigma$.  (Up to relabeling of $\Sigma$ and $\Sigma'$, we can assume the former case.)
\item $J_-$ and $J_+$ both have cardinality at least two.  If $W_+ = \{ \nu_i \ | \ i \in J_+ \}$
$W_- = \{ \nu_i \ | \ i \in J_- \}$, then the set of vectors $W_+ \cup W_-$ forms an oriented circuit.
\item If $\sigma$ is a maximal non-simplicial cone in $\Sigma_0$, then there is a $J_0 \subseteq \{1, \dots, r \}$ such that $J = J_+ \cup J_0 \cup J_-$ is a disjoint union and $\sigma = \sigma_J$.  If we let $W_0 = \{ \nu_i \ | \ i \in J_0 \}$,  then $W = W_+ \cup W_0 \cup W_-$ has cardinality $1+{\rm{dim}} \ N_\mathbb{R}$, and the subfans $\Sigma|_{\sigma}$ and $\Sigma'|_{\sigma}$ are respectively equal to the fans $\Sigma_+$ and $\Sigma_-$ from Lemma \ref{circuitcones}.
\end{enumerate}
  \end{thm}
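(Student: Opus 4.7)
The plan is to apply Theorem 15.3.13 of \cite{CLS} directly to the shared codimension-1 face $\Gamma_{\Sigma_0,\emptyset}$, and to translate its conclusions into the statement above by exploiting the fact that all three cones $\Gamma_{\Sigma,\emptyset}$, $\Gamma_{\Sigma',\emptyset}$, and $\Gamma_{\Sigma_0,\emptyset}$ lie in $\rm{Mov}_{\rm{GKZ}}$ and therefore carry the index set $I_\emptyset = \emptyset$. The moving cone is full-dimensional, so $\Gamma_{\Sigma,\emptyset}$ and $\Gamma_{\Sigma',\emptyset}$ are maximal in $\Sigma_{\rm{GKZ}}$, and by Proposition~\ref{gkzproperties}(4) the fans $\Sigma$ and $\Sigma'$ are simplicial with ray set $\{\nu_1,\ldots,\nu_r\}$; that is, they are projective $\Delta$-maximal fans.

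First I would establish the structural claims about $\Sigma_0$. Proposition~\ref{gkzproperties}(3) shows that $\Sigma$ and $\Sigma'$ both refine $\Sigma_0$ and that the face inherits $I_\emptyset = \emptyset$. Hence $\Sigma_0$ is a common coarsening; because $\Gamma_{\Sigma_0,\emptyset}$ is a proper face of the maximal cones, Proposition~\ref{gkzproperties}(4) prevents $\Sigma_0$ from being simplicial. Completeness is immediate from $|\Sigma_0| = \Cone(\nu_1,\ldots,\nu_r) = N_\mathbb{R}$, and nondegeneracy is exactly \cite[Proposition 14.4.12]{CLS}. The ray set is identified by Definition~\ref{gkzdef}(4): with $I_\emptyset = \emptyset$, every $\nu_i$ contained in a cone of $\Sigma_0$ must generate a ray, and every $\nu_i$ lies in some cone because $|\Sigma_0| = N_\mathbb{R}$.

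Next I would extract items (1)--(3) from the wall-crossing theorem of \cite{CLS}. That theorem supplies the linear dependence $\sum_{i\in J_+} b_i\nu_i + \sum_{i\in J_-}b_i\nu_i = 0$ and the signed partition of indices. Item (1) records that $\sigma_{J_+\cup J_-}$ lies in $\Sigma_0$ and that the two simplicial refinements arising from this oriented circuit -- obtained by deleting in turn a ray from $W_+$ or from $W_-$, as in Lemma~\ref{circuitcones} -- appear in $\Sigma$ and $\Sigma'$ respectively. Item (2) follows because a singleton $J_+$ or $J_-$ would leave only one triangulation of the circuit cone, so the two fans would not genuinely differ across the wall. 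Item (3) identifies the supplementary rays $W_0$ in any non-simplicial maximal cone $\sigma \in \Sigma_0$ as exactly those needed to enlarge $W_+\cup W_-$ to a spanning configuration of cardinality $1+\dim N_\mathbb{R}$ inside $\sigma$, at which point $\Sigma|_\sigma$ and $\Sigma'|_\sigma$ coincide with the two triangulations furnished by Lemma~\ref{circuitcones}.

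The main obstacle is essentially bookkeeping rather than substantive content: Theorem 15.3.13 of \cite{CLS} is phrased for general pairs $(\Sigma,I_\emptyset)$ and for walls between arbitrary adjacent GKZ maximal cones, and one must verify that our moving-cone hypotheses make every $I_\emptyset$ appearing in the statement vanish. Once this reduction is made, the circuit structure, the restriction-of-fans identification, and the cardinality bounds all transfer verbatim from the cited result, with no extra combinatorial input needed beyond the fact that $\{\nu_1,\ldots,\nu_r\} = \partial\Delta \cap N$ spans $N_\mathbb{R}$.
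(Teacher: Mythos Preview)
Your proposal is correct and matches the paper's approach: the paper does not give an independent proof of this theorem but presents it as the specialization of \cite[Theorem~15.3.13]{CLS} to the case where $\nu_1,\ldots,\nu_r$ are the boundary lattice points of $\Delta$ and all relevant GKZ cones lie in the moving cone, which is exactly the reduction you outline. Your bookkeeping (forcing $I_\emptyset=\emptyset$ throughout, invoking \cite[Proposition~14.4.12]{CLS} for nondegeneracy, and reading off the circuit data) is precisely what is needed to pass from the general statement in \cite{CLS} to the form used here.
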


\subsection{Proof of main result} We would like to show that any reflexive four-polytope $\Delta$ has good maximal cones.  The strategy will be as follows.  Any MPCP resolution of $\Sigma(\Delta)$ trivially has good maximal cones since its maximal cones are of the form $\Cone(v_1, \dots, v_4)$ with $v_1, \dots, v_4$ in a common facet of $\Delta$.  At least one MPCP resolution of $\Sigma(\Delta)$ always exists by the results of \cite{Ba}, and represents a maximal cone in the moving cone of $\Sigma_{\rm{GKZ}}$.  Since the moving cone is convex of full dimension, we can move from any maximal cone in $\rm{Mov}_{\rm{GKZ}}$ to another by a finite number of wall crossings.  Thus it suffices to show that if $\Sigma$ and $\Sigma'$ are projective $\Delta$-maximal fans related by a wall crossing, and one has good maximal cones, then the other does as well.

Once we know that all projective $\Delta$-maximal fans have good maximal cones, we will prove that $\Delta$ has good maximal cones by showing that any maximal cone of $\Delta$ must be a cone in some projective $\Delta$-maximal fan.

Thus, let $\Delta$ be a reflexive four-polytope and $\Sigma$ and $\Sigma'$ be two projective $\Delta$-maximal fans related by a wall crossing, as in Theorem \ref{wallcrossingsthm}.  The proof will be divided into two cases, depending on the cardinality of $J_- \cup J_+$.  

First suppose that $J_- \cup J_+$ has four elements.  Then it turns out that the lattice points in $W_-$ and $W_+$ will be the opposite vertices of a square.  This is the classical picture of an Atiyah flop.  See Figure \ref{flop}.

\begin{figure}[h] 
  \begin{center} \scalebox{0.6}{\input{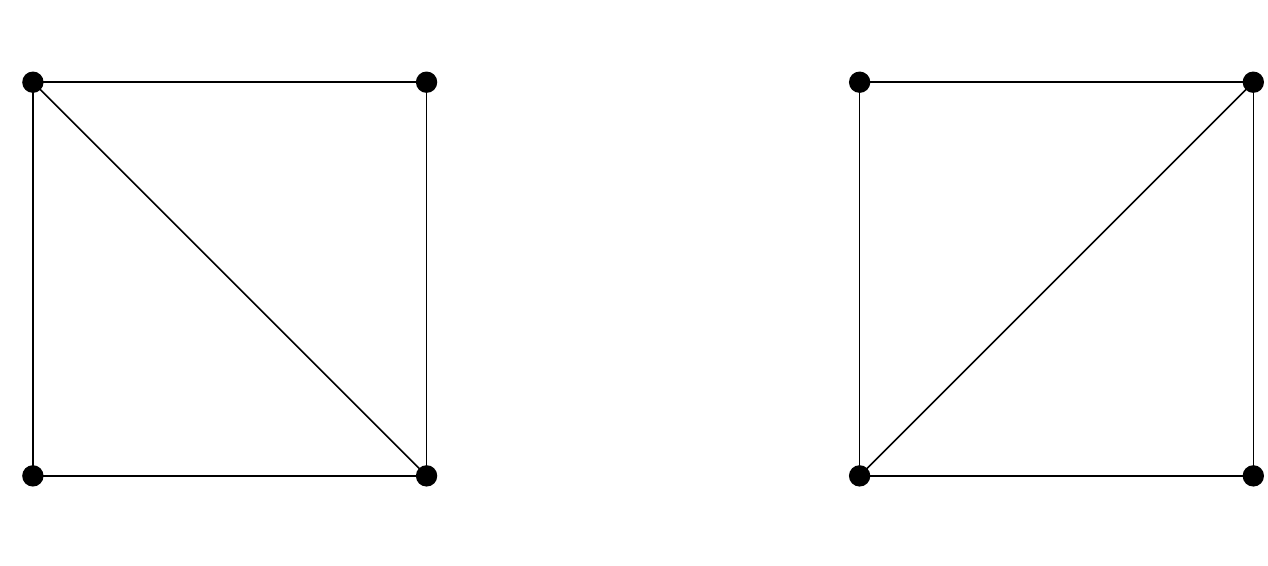_t}} \end{center}
   \caption{\label{flop} Cross sections of the fan $\Sigma$ and $\Sigma'$} 
\end{figure}

\begin{lemma} Let $e_1, \dots, e_4$ be the standard basis of $N \cong \mathbb{Z}^4$.  Up to a $\mathbb{Z}$-linear isomorphism, $W_+ = \{ e_1, e_2 \}$, and $W_- = \{ e_3, -e_3+e_1+e_2 \}$. \end{lemma}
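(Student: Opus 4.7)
My plan is to pin down the primitive circuit relation among the four vectors and then realize them inside a $\mathbb{Z}$-basis of $N$. After relabeling I assume $W_+ = \{w_1, w_3\}$ and $W_- = \{w_2, w_4\}$. First I unpack the description of $\Sigma|_\sigma$ and $\Sigma'|_\sigma$ provided by Theorem~\ref{wallcrossingsthm}(3) together with Lemma~\ref{circuitcones} on the relevant maximal non-simplicial cone $\sigma$ of $\Sigma_0$ containing the circuit (here $\sigma$ is $4$-dimensional with one extra ray $w_5 \in W_0$). Inspecting the top cones of $\Sigma_+$ and $\Sigma_-$ will show that each of the four triples $\{w_i, w_j, w_k\}$ drawn from the circuit appears as the ray set of some $3$-dimensional face of a top cone of either $\Sigma|_\sigma$ or $\Sigma'|_\sigma$, and hence as a $3$-dimensional cone of the $\Delta$-maximal fan $\Sigma$ or $\Sigma'$.

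Proposition~\ref{prop:fourdimsmooth}(1) then says every such triple is unimodular, i.e., generates a saturated rank-$3$ sublattice of $N$. Since all four vectors lie in the common $3$-dimensional subspace $V \subset N_\mathbb{R}$ spanned by the circuit, each such sublattice must coincide with $L := N \cap V$, so \emph{any} triple from $\{w_1,w_2,w_3,w_4\}$ is a $\mathbb{Z}$-basis of $L$. Expressing each $w_\ell$ as an integer combination of the other three gives four integer relations, each a scalar multiple of the primitive circuit dependence $\sum b_i w_i = 0$. Comparing coefficients forces $|b_i|=1$ for every $i$, and combined with the sign constraints coming from $W_+$ and $W_-$ this produces the relation
$$w_1 + w_3 = w_2 + w_4.$$

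For the final step I use saturatedness once more: the triple $\{w_1, w_2, w_3\}$ generates a saturated rank-$3$ sublattice of $N$, so it extends to a $\mathbb{Z}$-basis $\{w_1, w_2, w_3, v\}$ of $N$ for some $v \in N$. The $\mathbb{Z}$-linear isomorphism $N \xrightarrow{\sim} \mathbb{Z}^4$ sending $w_1, w_3, w_2, v$ to $e_1, e_2, e_3, e_4$ respectively then carries $w_4 = w_1 + w_3 - w_2$ to $e_1 + e_2 - e_3 = -e_3 + e_1 + e_2$, so $W_+$ is sent to $\{e_1, e_2\}$ and $W_-$ to $\{e_3, -e_3+e_1+e_2\}$, as required. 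I expect the main obstacle to be the careful bookkeeping in the first paragraph, identifying exactly which $3$-dimensional cones built from triples of circuit vectors actually lie in $\Sigma$ or $\Sigma'$; once that is extracted from the wall-crossing description, the coefficient argument and the basis extension are both routine.
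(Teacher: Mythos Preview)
Your argument is correct and follows essentially the same approach as the paper: identify, via Theorem~\ref{wallcrossingsthm}(3) and Lemma~\ref{circuitcones}, that all four $3$-cones spanned by triples from the circuit lie in $\Sigma$ or $\Sigma'$, then invoke Proposition~\ref{prop:3dimsmooth} to get unimodularity. The only difference is organizational: the paper fixes coordinates $w_1,w_2,w_3 \mapsto e_1,e_2,e_3$ first and then reads off $w_4$ directly from unimodularity of the remaining three cones together with the sign constraints on the $b_i$, whereas you first deduce abstractly that every triple is a $\mathbb{Z}$-basis of $N \cap V$ (forcing $|b_i|=1$) and only then choose coordinates---but the content is the same.
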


\begin{proof} Let $W_+ = \{ w_1, w_2 \}$ and $W_- = \{ w_3, w_4 \}$.  Up to relabeling of $\Sigma$ and $\Sigma'$, it follows from Lemma \ref{circuitcones} and Theorem \ref{wallcrossingsthm} that 
\begin{align*}
\Cone(w_1, w_2, w_3), \Cone(w_1, w_2, w_4) &\in \Sigma \\ 
\Cone(w_1, w_3, w_4), \Cone(w_2, w_3, w_4) &\in \Sigma'.
\end{align*}  
Each of these four cones is smooth by Proposition \ref{prop:3dimsmooth}.  Since $\Cone(w_1, w_2, w_3)$ is smooth, there is a $\mathbb{Z}$-linear isomorphism sending $w_i$ to $e_i$ for $i = 1, 2, 3$.  By definition of $W_+$ and $W_-$, we have that $b_1 w_1 + b_2 w_2 = b_3 w_3 + b_4 w_4$ for positive integers $b_i$.  It follows from smoothness of the other three cones that $w_4 = -e_3+e_1+e_2$. \end{proof}

\begin{prop} \label{fourelements} If $J_- \cup J_+$ has four elements and either $\Sigma$ or $\Sigma'$ has good maximal cones, then both do.
\end{prop}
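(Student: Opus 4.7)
By symmetry (swapping $\Sigma \leftrightarrow \Sigma'$ corresponds to swapping $\{w_1, w_2\} \leftrightarrow \{w_3, w_4\}$), it suffices to assume $\Sigma$ has good maximal cones and deduce the same for $\Sigma'$. The two fans agree outside the flop region, so the only cones to check are the $\Sigma'$-side maximal cones $\Cone(w_1, w_3, w_4, w_5)$ and $\Cone(w_2, w_3, w_4, w_5)$, with sums $S_3, S_4$. The $\Sigma$-side cones have sums $S_1 = w_1 + w_2 + w_3 + w_5$ and $S_2 = w_1 + w_2 + w_4 + w_5$, which by hypothesis lie in $3 \cdot \partial \Delta$ or $4 \cdot \partial \Delta$. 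I will use the circuit relation $w_1 + w_2 = w_3 + w_4$ throughout.

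The plan begins with the easy case: if $S_1 \in 4 \cdot \partial \Delta$, some facet $F$ contains $w_1, w_2, w_3, w_5$, so $\langle u_F, w_4 \rangle = 1 + 1 - 1 = 1$ places $w_4 \in F$ and all five $w_i$ share $F$, making every $S_i \in 4 \cdot \partial \Delta$. Otherwise $S_1, S_2 \in 3 \cdot \partial \Delta$ strictly, and I pick facets $F_1, F_2$ with $\langle u_{F_i}, S_i \rangle = 3$. Each $F_i$ evaluates to $1$ on three generators of its cone and $0$ on the fourth. The circuit relation together with $\langle u_F, \cdot \rangle \leq 1$ on $\Delta$ rules out the case where the missing generator is the $W_-$-element of that cone (it would force a value of $2$ on the other $W_-$-element), leaving three sub-cases per facet: the missing generator is $w_5$ (call this $(D_i)$), $w_1$ ($(A_i)$), or $w_2$ ($(B_i)$).

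Computing $\langle u_{F_i}, S_3 \rangle$ and $\langle u_{F_i}, S_4 \rangle$ directly in each sub-case shows that $(D_i)$ certifies both $S_3, S_4 \in 3 \cdot \partial \Delta$; $(A_i)$ certifies only $S_4$ and gives $\langle u_{F_i}, S_3 \rangle = 2$; and $(B_i)$ certifies only $S_3$ and gives $\langle u_{F_i}, S_4 \rangle = 2$. Consequently both conclusions hold in every combination of the sub-cases except when every facet witnessing $S_1 \in 3 \cdot \partial \Delta$ is of type $(A_1)$ and every facet witnessing $S_2 \in 3 \cdot \partial \Delta$ is of type $(A_2)$ (in which case $S_3$ remains uncertified), or the symmetric configuration of $(B_1)$ and $(B_2)$ (in which case $S_4$ remains uncertified).

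The main obstacle — and the only subtle step — is to eliminate these two residual configurations. The argument I would use is to apply Lemma~\ref{uniqueness} to the three-dimensional cone $\Cone(w_1, w_2, w_5) \in \Sigma$, a face of $\Cone(w_1, w_2, w_3, w_5)$. Its generators cannot share a facet, since then $\langle u_F, w_3 + w_4 \rangle = 2$ would force $w_3, w_4$ into that facet as well, giving all five $w_i$ in a common facet and contradicting the working assumption that $S_1 \not\in 4 \cdot \partial \Delta$. In configuration $(A_1) + (A_2)$, the lattice points $m_i := -u_{F_i}$ of $\Delta^*$ both satisfy $\langle m_i, w_2 \rangle = \langle m_i, w_5 \rangle = -1$ and $\langle m_i, w_1 \rangle = 0$, so Lemma~\ref{uniqueness}(2) forces $m_1 = m_2$, hence $F_1 = F_2$; but $(A_1)$ requires $\langle u_{F_1}, w_3 \rangle = 1$ while $(A_2)$ requires $\langle u_{F_2}, w_3 \rangle = 0$, a contradiction. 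The identical argument, using the pair $\{w_1, w_5\}$ as the $-1$-values with $\langle \cdot , w_2 \rangle = 0$, eliminates $(B_1) + (B_2)$, closing the proof.
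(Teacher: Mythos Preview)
Your proof is correct and follows essentially the same route as the paper: the same reduction to the flop region, the same easy ``level~$4$'' case, the same case split on which generator the witnessing facet misses (your $(D_i), (A_i), (B_i)$ are precisely the paper's Cases~2A, 2B, 2C), and the same use of Lemma~\ref{uniqueness}(2) applied to $\Cone(w_1,w_2,w_5)$ to eliminate the adjacent-edges configuration. Your labeling and bookkeeping are in fact cleaner than the paper's, which has some index slips in Cases~2B--2C.
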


\begin{proof} Let $W_+ = \{ w_1, w_2 \}$ and $W_- = \{ w_3, w_4 \}$.  According to Theorem \ref{wallcrossingsthm}, part 3, any cones of $\Sigma$ not contained in $\Sigma'$ must be contained in a cone of $\Sigma_0$
of the form $\Cone(w_1, w_2, w_3, w_4, w_5)$ for some other lattice point $w_5$ of Delta.  In the notation of Theorem \ref{wallcrossingsthm}, $\Cone(w_1, w_2, w_3, w_4, w_5) = \sigma_J$.  (There must be only one generator in addition to $w_1, \dots, w_4$ because the cardinality of $J$ must equal $1+{\rm{dim}} \ N_\mathbb{R} = 5$.)  We must
have that either $C_1 = \Cone(w_1, w_2, w_3, w_5)$ and $C_2 = \Cone(w_1, w_2, w_4, w_5)$ are in $\Sigma$, or $C_3 = \Cone(w_1, w_3, w_4, w_5)$ and $C_4 = \Cone(w_2, w_3, w_4, w_5)$ are in $\Sigma$.

Assume without loss that $C_1$ and $C_2$ are in $\Sigma$.  By assumption, $\Sigma$ has good maximal cones.  We must show that $\Sigma'$ also has good maximal cones, which means showing that
the cones $C_3, C_4 \in \Sigma'$ are good maximal cones.  There are several cases to consider.

Case 1: Either $w_1+w_2+w_3+w_5 \in 4 \cdot \partial \Delta$ or $w_1+w_2+w_4+w_5 \in 4 \cdot \partial \Delta.$  Assume without loss that we 
are in the first case and $w_1+w_2+w_3+w_5 \in 4 \cdot \partial \Delta$.  By Lemma \ref{lemma:commonface}, there must be some facet $F \subseteq \Delta$ such that
$\langle u_F, w_i \rangle = 1$ for i = 1, 2, 3, and 5.  It follows by linearity that $\langle u_F, w_4 \rangle = 1$, so $w_i$ for $1 \leq i \leq 5$ are all contained in $F$.  Thus by Lemma \ref{lemma:commonface}, $w_1+w_3+w_4+w_5$ and $w_2+w_3+w_4+w_5$ are in $4 \cdot \partial \Delta$.

Case 2: Both $w_1+w_2+w_3+w_5$ and $w_1+w_2+w_4+w_5$ are in $3 \cdot \partial \Delta$.  Then there are faces $F, G \subseteq \Delta$ such that $u_F$ 
evaluates to 1 on exactly three of $w_1, w_2, w_3, w_5$ and 0 on the remaining lattice point, and $u_G$ evaluates to 1 on exactly three of 
$w_1, w_2, w_4, w_5$ and 0 on the remaining lattice point.  We divide this into three subcases, 2A, 2B, and 2C.

Case 2A: Either $\langle u_F, w_5 \rangle = 0$ or $\langle u_G, w_5 \rangle = 0$.  Then it follows by linearity that either $\langle u_F, w_i \rangle = 1$ for all $1 \leq i \leq 4$ or 
$\langle u_G, w_i \rangle = 1$ for all $1 \leq i \leq 4$.  In either case we have that one of $u_F$ or $u_G$ evaluates to 3 on $w_1+w_3+w_4+w_5$ and $w_2+w_3+w_4+w_5$,
so $C_3$ and $C_4$ are good maximal cones.

Case 2B: $u_F$ evaluates to identically zero on $E_1$ and $u_G$ evaluates to identically zero on $E_2$, where $E_1$ and $E_2$ are opposite
edges of the square $\Conv(w_1, w_2, w_3, w_4)$.  Without loss, assume that $E_1 = \Conv(w_1, w_3)$ and $E_2 = \Conv(w_2, w_4)$.  Then 
$\langle u_F, w_2 \rangle = \langle u_F, w_4 \rangle = \langle u_F, w_5 \rangle = 1$ and $\langle u_F, w_3 \rangle = 0$ implies that $C_4$ is a good maximal cone, while $\langle u_G, w_1 \rangle = \langle u_G, w_3 \rangle = 
\langle u_G, w_5 \rangle = 1$ and $\langle u_G, w_4 \rangle = 0$ implies $C_4$ is a good maximal cone.

Case 2C: $u_F$ evaluates to identically zero on $E_3$ and $u_G$ evaluates to identically zero on $E_4$, where $E_3$ and $E_4$ are adjacent
edges of the square $\Conv(w_1, w_2, w_3, w_4)$.  Without loss, assume that $E_3 = \Conv(w_1, w_3)$ and $E_4 = \Conv(w_1, w_4)$.  Then we have that $\langle u_F, w_2 \rangle = \langle u_F, w_4 \rangle = \langle u_F, w_5 \rangle = 1$ and $\langle u_G, w_2 \rangle = \langle u_F, w_3 \rangle = \langle u_F, w_5 \rangle = 1$.

First assume that $w_1, w_2$ and $w_5$ are in a common facet of $\Delta$.  Then there is some facet $H \subseteq \Delta$ such that
$\langle u_H, w_1 \rangle = \langle u_H, w_2 \rangle = \langle u_H, w_5 \rangle = 1$.  By integrality and the fact that $\langle u_H, p \rangle$ must be at most 1 for any $p \in \Delta$, it follows that 
$\langle u_H, w_3 \rangle = \langle u_H, w_4 \rangle = 1$ as well.  But then we would be in Case 1.

Now assume $w_1, w_2, w_5$ are not in a common facet.  We claim this is impossible.  Notice that $-u_F$ and $-u_G$ are distinct lattice points of $\Delta^*$.  Also, $-u_F$ and $-u_G$ both evaluate to $-1$ on $w_2$ and $w_5$, and evaluate to 0 on $w_1$.  We have that $\Cone(w_1, w_2, w_5)$ is a cone in the $\Delta$-maximal fan $\Sigma$.  By part 2 of Lemma \ref{uniqueness}, this situation is impossible.
\end{proof}

\begin{prop} \label{fiveelements} If $J_- \cup J_+$ has five elements and either $\Sigma$ or $\Sigma'$ has good maximal cones, then both do.
\end{prop}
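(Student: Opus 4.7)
The plan is to parallel the strategy of Proposition \ref{fourelements}. By symmetry of the wall crossing under swapping $W_+ \leftrightarrow W_-$ (and correspondingly $\Sigma \leftrightarrow \Sigma'$), it suffices to treat one direction; after relabeling we may assume $W_+ = \{w_1, w_2\}$, $W_- = \{w_3, w_4, w_5\}$, with relation $b_1 w_1 + b_2 w_2 = |b_3| w_3 + |b_4| w_4 + |b_5| w_5$ (all coefficients positive), so $\Sigma$ contains the three maximal cones $C_\ell := \Cone(\{w_1, w_2\} \cup (W_- \setminus \{w_\ell\}))$ for $\ell \in \{3, 4, 5\}$ and $\Sigma'$ contains the two maximal cones $C'_k := \Cone(\{w_1,\dots,w_5\} \setminus \{w_k\})$ for $k \in \{1, 2\}$, all inside the shared region $\Cone(w_1,\dots,w_5)$. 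Assuming $\Sigma$ has good maximal cones, for each $\ell$ fix a facet $F_\ell$ witnessing goodness of $C_\ell$, and set $S = w_1+\cdots+w_5$; goodness of $C'_k$ is equivalent to producing some facet $G$ with $\langle u_G, S-w_k\rangle \geq 3$.

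The first observation is that if $w_\ell \in F_\ell$ for some $\ell$, then $\langle u_{F_\ell}, S\rangle \geq 4$, so since $\langle u_{F_\ell}, w_k\rangle \leq 1$, the facet $F_\ell$ witnesses both $C'_1$ and $C'_2$. The same conclusion holds whenever some $C_\ell$ is witnessed by all four of its vertices lying in $F_\ell$ and $\langle u_{F_\ell}, w_\ell\rangle = 0$. The critical subcase is therefore when every $C_\ell$ is witnessed by all four vertices in $F_\ell$ while $\langle u_{F_\ell}, w_\ell\rangle \leq -1$ for each $\ell$. Feeding the relation through each $u_{F_\ell}$ yields
$$b_1 + b_2 + |b_\ell| \leq |b_i| + |b_j|,\qquad \{i,j,\ell\} = \{3,4,5\},$$
and summing pairs of these inequalities gives $|b_i| \geq b_1 + b_2$ for each $i \in \{3, 4, 5\}$. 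Now $\Cone(w_3, w_4, w_5) \in \Sigma'$ is a 3-cone of a $\Delta$-maximal fan, so by Lemma \ref{uniqueness} either $w_3, w_4, w_5$ lie in a common facet $G$ (with $u_G$-values all $1$), or $w_3+w_4+w_5 \in 2\cdot\partial\Delta$ is witnessed by some facet $G$ with $u_G$-values $(1,1,0)$ on $(w_3, w_4, w_5)$ in some order. In either case, applying $u_G$ to the linear relation forces either $|b_3|+|b_4|+|b_5|$ or $|b_i|+|b_j|$ to equal $b_1\langle u_G, w_1\rangle + b_2\langle u_G, w_2\rangle \leq b_1+b_2$, contradicting the bound $|b_i|\geq b_1+b_2$. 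This rules out the critical subcase.

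The remaining work handles configurations in which some $C_\ell$ is witnessed only with three (not four) of its vertices in the witnessing facet $F_\ell$, so that a designated fourth vertex has $u_{F_\ell}$-value $0$. These are organized by whether the bad vertex lies in $W_+$ or $W_-$, and by the sign of $\langle u_{F_\ell}, w_\ell\rangle$ for each $\ell$. In each subcase one either exhibits some $F_\ell$ directly as a witness for one or both $C'_k$ (combining witnesses from different $\ell$ when needed), or invokes Lemma \ref{uniqueness}, typically part 2 applied to $\Cone(w_3, w_4, w_5) \in \Sigma'$ or to $\Cone(w_1, w_2, w_\ell) \in \Sigma$, to produce two distinct lattice points of $\Delta^*$ with identical prescribed values on three generators, a contradiction. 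The main obstacle is the combinatorial bookkeeping: the five-element circuit generates strictly more subcases than the four-element circuit of Proposition \ref{fourelements}, but in each subcase the linear relation together with Lemma \ref{uniqueness} is strong enough to close the argument.
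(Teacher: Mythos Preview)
Your symmetry reduction at the start is not valid. Swapping $W_+ \leftrightarrow W_-$ does swap $\Sigma \leftrightarrow \Sigma'$, but it also swaps $|W_+|$ from $2$ to $3$. Your entire case analysis is written for the configuration $|W_+|=2$, with $\Sigma$ carrying the three maximal cones $C_3,C_4,C_5$ and $\Sigma'$ the two cones $C'_1,C'_2$; you then assume the three-cone side is good and argue the two-cone side is good. The reverse direction (two hypotheses implying three conclusions) is a genuinely different combinatorial problem, and the swap does not reduce it to what you proved. So as written you have established only half of the equivalence. The ``remaining work'' paragraph is also only a sketch: you assert that Lemma~\ref{uniqueness} closes every mixed subcase but do not carry any of them out, and with a five-element circuit the number of subcases is substantial.

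The paper's proof avoids both problems by taking a very different route. It never uses the hypothesis that one side is good; instead it shows directly that for a five-element circuit arising from a flipping wall, every four-element subsum $w_i+w_j+w_k+w_\ell$ lies in $3\cdot\partial\Delta$ or $4\cdot\partial\Delta$. Labeling so that $W_+=\{w_1,w_2,w_3\}$, $W_-=\{w_4,w_5\}$, it splits on whether $w_1,w_2,w_3$ share a facet. If so, applying the two facet normals $u_F$ (for $w_1,w_2,w_3$) and $u_G$ (for $w_4,w_5$, via Lemma~\ref{lemma:edges}) to the relation forces all five into a common facet. If not, Proposition~\ref{prop:fourdimsmooth}(2) makes both $\Cone(w_1,w_2,w_3,w_4)$ and $\Cone(w_1,w_2,w_3,w_5)$ unimodular, so one can take $w_i=e_i$ for $i\le 4$ and write $w_5=aw_1+bw_2+cw_3-w_4$; then the facet from Lemma~\ref{uniqueness}(1) applied to $\Cone(w_1,w_2,w_3)$ forces $a=b=1$ and $\langle u_F,w_4\rangle=\langle u_F,w_5\rangle=1$, giving $\langle u_F,S\rangle=4$, which is enough for every four-element subsum. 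This handles both directions simultaneously and requires no subcase enumeration beyond the single dichotomy on $w_1,w_2,w_3$. The key idea you are missing is to exploit unimodularity (Proposition~\ref{prop:fourdimsmooth}(2)) to pin down the circuit explicitly, rather than reasoning only through facet-value inequalities.
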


\begin{figure}[h] 
  \begin{center} \scalebox{0.6}{\input{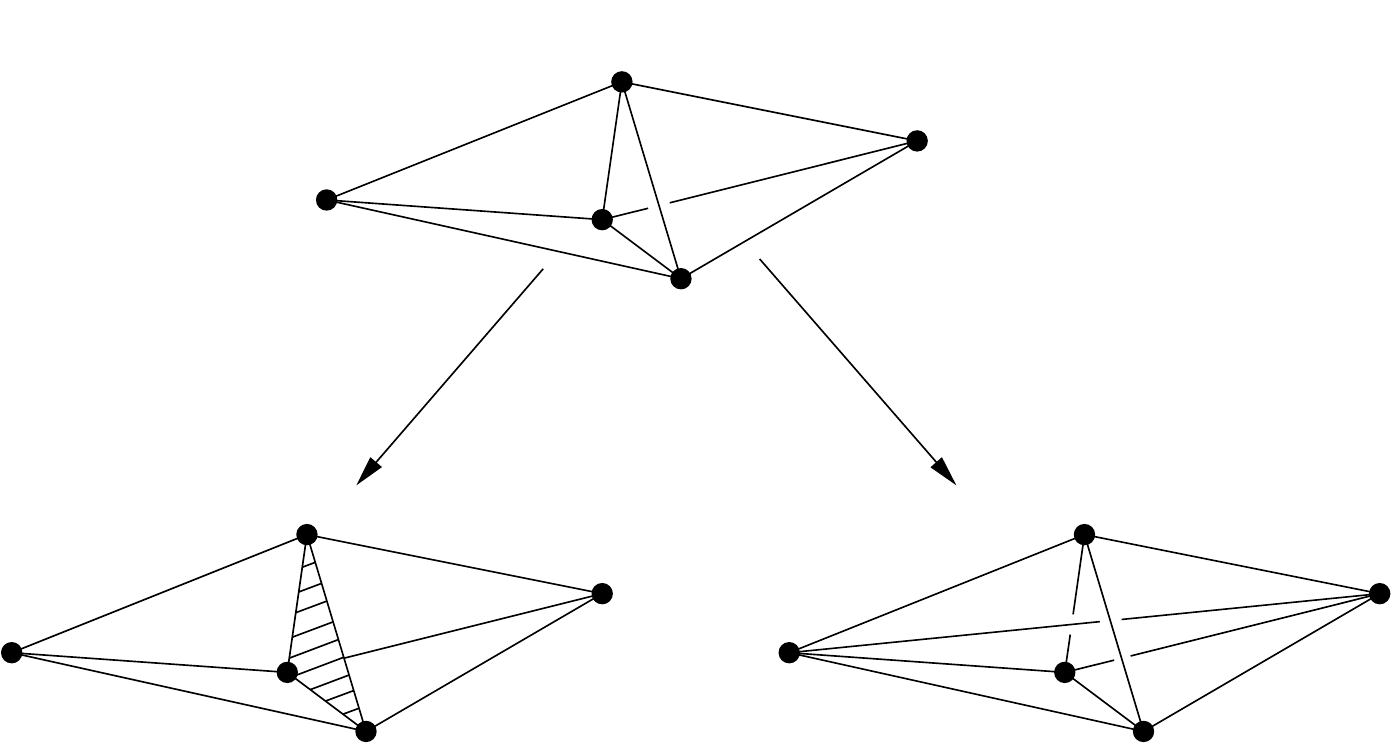_t}} \end{center}
   \caption{\label{wallcrossing} Cross sections of the fan $\Sigma_0$ (top), $\Sigma$ (lower left), and $\Sigma'$ (lower right)} 
\end{figure}

\begin{proof} We may assume up to interchanging $J_+$ and $J_-$ that $J_+$ has three elements and $J_-$ has two elements.  Then $W_+ = \{ w_1, w_2, w_3 \}$ and $W_-= \{ w_4, w_5 \}$. 

$\Cone(w_1, w_2, w_3)$ must be a cone of either $\Sigma$ or $\Sigma'$; in the following proof we can assume without loss that it is a 
cone of $\Sigma$.  Then the maximal cones of $\Sigma$ not contained in $\Sigma'$ must be
exactly $\Cone(w_1, w_2, w_3, w_4)$ and $\Cone(w_1, w_2, w_3, w_5)$.  Likewise, the maximal cones of $\Sigma'$ not contained in $\Sigma$ must be
exactly 
$$
\Cone(w_1, w_2, w_4, w_5), \  
\Cone(w_1, w_3, w_4, w_5), \
\Cone(w_2, w_3, w_4, w_5).
$$
See Figure 2 for cross sections of the relevant cones in $\Sigma_0$, $\Sigma$, and $\Sigma'$.  (Note this figure may be somewhat misleading, because the five vertices $w_1, \dots, w_5$ will not always be contained in a common affine hyperplane.)

First suppose that $w_1, w_2, w_3$ are contained in some common facet $F \subseteq \Delta$.  Let $u_F$ be the facet normal to $F$.  Applying $u_F$ to
the linear dependence equation $b_1w_1+b_2w_2+b_3w_3 = b_4w_4+b_5w_5$, we get $b_1+b_2+b_3 = b_4 \langle u_F,w_4 \rangle+b_5 \langle u_F,w_5 \rangle$.  Since $\Cone(w_4, w_5)$ is a cone in $\Sigma'$,
we have by Lemma \ref{lemma:edges} that $w_4$ and $w_5$ are contained in some common facet of $\Delta$, say $G$.  Applying $u_G$ to the same
equation gives $b_1 \langle u_G, w_1 \rangle + b_2 \langle u_G,w_2 \rangle + b_3 \langle u_G,w_3 \rangle=b_4+b_5$.  Since $\langle u_F, w_i \rangle, \langle u_G, w_i \rangle \leq 1$ for all $1 \leq i \leq 5$, 
and all the $b_i$ are positive, we get that $b_1+b_2+b_3 \leq b_4+b_5$ and $b_1+b_2+b_3 \geq b_4+b_5$, so $b_1+b_2+b_3 = b_4+b_5$.  This also implies that
$\langle u_F, w_4 \rangle = \langle u_F, w_5 \rangle = 1$, so $w_1, \dots, w_5$ are all in a common facet, and $\Sigma$ and $\Sigma'$ must both have good maximal cones.

Now assume that $w_1, w_2, w_3$ are not contained in a common facet.  By part 1 of Lemma \ref{uniqueness}, there is a vertex $m$ of $\Delta^*$, with corresponding facet normal $u_F = -m$, such that (up to relabeling of $w_1$, $w_2$, $w_3$) $\langle u_F, w_1 \rangle = \langle u_F, w_2 \rangle = 1$, and $\langle u_F, w_3  \rangle = 0$.

Since $\Cone(w_1, w_2, w_3, w_4)$ and $\Cone(w_1, w_2, w_3, w_5)$ are cones in
the maximal fan $\Sigma$, $\{ w_1, w_2, w_3, w_4 \}$ and $\{ w_1, w_2, w_3, w_5 \}$ must be lattice bases by Proposition \ref{prop:3dimsmooth}.  Up to a $\mathbb{Z}$-linear isomorphism we may
assume that $w_i = e_i$ for $1 \leq i \leq 4$, where $e_i$ is the standard basis of $N \cong \mathbb{Z}^4$.  Then if $\{w_1, w_2, w_3, w_5 \}$ is a lattice basis, we must have that $w_5 = a w_1 +b w_2 + c w_3- w_4$
for some integers $a, b, c$.  This implies that $w_4+w_5 = a w_1 +b w_2 +c w_3$.  By definition of a circuit, there can be only one equation of linear dependence between $w_1, \dots, w_5$
up to rescaling.  Since $b_4 w_4 +b_5 w_5 = b_1 w_1 +b_2 w_2 + b_3 w_3$ for positive $b_i$, we must have that $a, b, c$ are positive integers.  Applying $u_F$ on both sides,
we get that $\langle u_F, w_4 \rangle + \langle u_F, w_5 \rangle = a+b$.  Since the LHS is at most two and the RHS is at least two, we get that $a = b = \langle u_F, w_4 \rangle =
\langle u_F, w_5 \rangle = 1$.  This implies that $\langle u_F, w_1+w_2+w_3+w_4+w_5 \rangle = 4$, and $w_1+w_2+w_3+w_4+w_5 \in 4 \cdot \partial \Delta$.  Then for any distinct $1 \leq i, j, k, \ell \leq 5$, we get that $w_i + w_j + w_k + w_\ell \in 3 \cdot \partial \Delta$ or $4 \cdot \partial \Delta$ (since if $w_i + w_j + w_k + w_\ell \in 2 \Delta$,
adding the remaining $w_m$ for $1 \leq m \leq 5$ would give a lattice point in $3 \Delta$).  Thus $\Sigma$ and $\Sigma'$ both have good maximal
cones.
\end{proof}

Propositions \ref{fourelements} and \ref{fiveelements} establish that any projective $\Delta$-maximal fan has good maximal cones.  By the following lemma, this is enough to imply that $\Delta$ has good maximal cones.

\begin{lemma} Let $\Delta$ be a reflexive four-polytope and let $C$ be a maximal cone of $\Delta$.  Then there exists a projective $\Delta$-maximal fan $\Sigma$ with $C \in \Sigma$. \end{lemma}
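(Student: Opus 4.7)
The plan is to construct $\Sigma$ as the face fan of a suitably chosen rational polytope $P \subseteq N_\mathbb{R}$. Write $C = \Cone(v_1, v_2, v_3, v_4)$, and let $\ell \in M_\mathbb{Q}$ be the unique linear functional with $\ell(v_i) = 1$ for $i = 1, \ldots, 4$ (well-defined by linear independence). Enumerate the lattice points of $\partial \Delta$ lying outside $\{v_1, v_2, v_3, v_4\}$ as $\nu_1, \ldots, \nu_s$; the hypothesis that $C$ is a maximal cone of $\Delta$ forces each such $\nu_j$ to lie outside $C$.

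For each $j$, choose a positive rational height $h_j > \max(\ell(\nu_j), 0)$, so that $\ell(\nu_j/h_j) < 1$, and set
$$ P = \Conv\!\left( v_1, v_2, v_3, v_4, \tfrac{\nu_1}{h_1}, \ldots, \tfrac{\nu_s}{h_s} \right) \subseteq N_\mathbb{R}. $$
Then the hyperplane $\{\ell = 1\}$ supports $P$ and meets it precisely in $\Conv(v_1, v_2, v_3, v_4)$, which is a $3$-simplex by linear independence and hence a simplicial facet of $P$. Reflexivity of $\Delta$ ensures that the scaled configuration $\{v_i\} \cup \{\nu_j/h_j\}$ still positively spans $N_\mathbb{R}$ around $0$, so $0$ lies in the interior of $P$. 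The constraints $h_j > \max(\ell(\nu_j), 0)$ define an open set in the space of heights; inside this set, by a generic choice, one further arranges that $P$ is simplicial and each $\nu_j/h_j$ is a vertex of $P$. This generic perturbation does not disturb the facet $\Conv(v_1, v_2, v_3, v_4)$, because the defining inequalities for the other $\nu_j$ are strict.

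Let $\Sigma$ be the face fan of $P$: the fan whose cones are the cones from $0$ over the proper faces of $P$. Completeness follows from $0 \in \hbox{int}(P)$, simpliciality from $P$ being simplicial, and the rays of $\Sigma$ pass through exactly the nonzero lattice points of $\Delta$, so $\Sigma$ is $\Delta$-maximal; moreover, $C$ is the cone over the facet $\Conv(v_1, v_2, v_3, v_4)$, so $C \in \Sigma$. The gauge function $\mu_P(x) = \inf\{t \geq 0 : x \in tP\}$ is piecewise linear with respect to $\Sigma$ and strictly convex, giving projectivity. The main technical step is the genericity argument producing a simplicial $P$ with the required vertex set; it succeeds essentially because the maximal-cone hypothesis keeps each $\nu_j$ out of $C$, so the scaled $\nu_j/h_j$ do not interfere with the prescribed facet.
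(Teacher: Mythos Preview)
Your genericity claim is the gap. You assert that for generic heights $h_j$ in the open region $\{h_j > \max(\ell(\nu_j),0)\}$, each $\nu_j/h_j$ is a vertex of $P$ and $P$ is simplicial. But the vertex condition is not generic in this region: whenever some $\nu_j$ is inessential for positively spanning $N_\mathbb{R}$ (which happens as soon as there are more than a handful of boundary lattice points), taking $h_j$ large pushes $\nu_j/h_j$ close to the origin and into the interior of the convex hull of the remaining points, so it is not a vertex. This failure persists on an open set of heights, so the condition you need is merely open, not dense, and you have given no argument that it is nonempty. Your closing remark---that the $\nu_j$ lie outside $C$ and hence do not disturb the facet $\Conv(v_1,\ldots,v_4)$---addresses only the survival of that one facet, not the question of whether the remaining points are vertices.

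The paper avoids this difficulty by not attempting to produce a simplicial polytope in one step. It takes $P = \Conv(av_1,\ldots,av_4,w_1,\ldots,w_r)$ with $w_j$ the vertices of $\Delta$ and $a \gg 0$, so that $\Conv(av_i)$ is a facet and $C$ is a maximal cone of the (generally non-simplicial) face fan $\Sigma(P)$. Then $\Gamma_{\Sigma(P),\emptyset}$ lies in the moving cone of $\Sigma_{\rm GKZ}$, and any maximal secondary cone containing it corresponds to a projective $\Delta$-maximal refinement $\Sigma'$ of $\Sigma(P)$; since $C$ contains no boundary lattice points of $\Delta$ other than its four generators, it cannot be subdivided in this refinement, so $C \in \Sigma'$. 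Your face-fan idea is sound in spirit and would give a more self-contained proof if it worked, but as written the crucial existence step is asserted rather than proved; to repair it you would either need an explicit inductive construction of the heights or, as the paper does, an appeal to the secondary fan to pass from a non-simplicial projective fan to a simplicial one.
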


\begin{proof}  Let $C = \Cone(v_1, v_2, v_3, v_4)$ with $v_1, \dots, v_4$ lattice points in $\Delta$.  Let the vertices of $\Delta$ be $w_1, \dots, w_r$.  Consider the polytope $$P = \Conv(av_1, av_2, av_3, av_4, w_1, \dots, w_r)$$ where $a$ is some sufficiently large positive number.  Then $P$ will contain the origin in its interior and have $\Conv(av_1, av_2, av_3, av_4)$ as a face.  The fan of cones over proper faces of $P$, $\Sigma(P)$, is projective and will have $C \in \Sigma(P)$.  

Let $\Sigma_{\rm{GKZ}}$ be the secondary fan associated to the elements $\nu_1, \dots, \nu_r$ of $\partial \Delta \cap N$. Then $\Gamma_{\Sigma(P),\emptyset}$ is cone in the moving cone of $\Sigma_{\rm{GKZ}}$.  Let $\Sigma'$ be such that $\Gamma_{\Sigma', \emptyset}$ is a maximal cone of $\Sigma_{\rm{GKZ}}$ in the moving cone with $\Gamma_{\Sigma(P),\emptyset} \subseteq \Gamma_{\Sigma', \emptyset}$.  Then $\Sigma'$ is a projective $\Delta$-maximal fan which is a refinement of $\Sigma(P)$, in particular, $C \in \Sigma'$. \end{proof}

We can now state the following main result:

\begin{thm} \label{goodness} Any reflexive 4-polytope $\Delta$ has good maximal cones.  If $\Sigma$ is any (possibly non-projective) $\Delta$-maximal fan, then generic members of the anticanonical hypersurface family $\mathcal{X}(\Sigma)$ are smooth. \end{thm}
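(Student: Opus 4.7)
My plan is to synthesize the pieces already in place. The strategy outlined at the start of Section 4.3 gives the backbone: transfer the ``good maximal cones'' property from a trivial starting point across the whole moving cone of the secondary fan via wall crossings, and then propagate it from projective fans to arbitrary $\Delta$-maximal fans and from fans to the polytope.

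First I would establish that every projective $\Delta$-maximal fan $\Sigma$ has good maximal cones. The base case is any MPCP resolution $\widehat{\Sigma}(\Delta)$: since such a fan refines $\Sigma(\Delta)$, each of its maximal cones $\Cone(v_1,\ldots,v_4)$ has $v_1,\ldots,v_4$ in a common facet $F$ of $\Delta$, so by the definition of $u_F$ we get $v_1+\cdots+v_4\in 4\cdot\partial\Delta$, making the cone a good maximal cone. By Batyrev's results such a resolution exists, giving a maximal GKZ cone inside $\rm{Mov}_{\rm{GKZ}}$. Now by Proposition \ref{gkzproperties}(1) and the discussion at the end of Section 4.1, $\rm{Mov}_{\rm{GKZ}}$ is full-dimensional and convex, so any two maximal cones in it are connected by a chain of codimension-one walls lying in its interior. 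Each such wall is a flipping wall of the type analyzed in Theorem \ref{wallcrossingsthm}, hence falls under the circuit cardinality $4$ or $5$ dichotomy. Propositions \ref{fourelements} and \ref{fiveelements} show that the ``good maximal cones'' property is preserved across each wall crossing, so by induction along the chain every projective $\Delta$-maximal fan inherits the property from the starting MPCP resolution.

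Next I would upgrade this from fans to the polytope. Let $C=\Cone(v_1,\ldots,v_4)$ be any maximal cone of $\Delta$ in the sense of Definition \ref{maximalcones}. The lemma preceding the theorem produces a projective $\Delta$-maximal fan $\Sigma$ with $C\in\Sigma$. By the previous step, $\Sigma$ has good maximal cones, so $C$ is good. Since $C$ was arbitrary, $\Delta$ has good maximal cones.

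Finally, I would deduce smoothness for an arbitrary, possibly non-projective, $\Delta$-maximal fan $\Sigma$. Any $4$-dimensional cone $\Cone(v_1,\ldots,v_4)$ of $\Sigma$ is a maximal cone of $\Delta$: the $v_i$ are linearly independent by simpliciality, and by the $\Delta$-maximality condition on rays no other nonzero lattice point of $\Delta$ can lie in $\Cone(v_1,\ldots,v_4)$, for otherwise its primitive generator would be a ray of $\Sigma$ strictly contained in this cone, contradicting simpliciality. Hence each such $4$-cone is a good maximal cone, i.e.\ either $v_1+\cdots+v_4\in 3\cdot\partial\Delta$ or $v_1+\cdots+v_4\in 4\cdot\partial\Delta$; the second alternative forces all $v_i$ into a common facet. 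Proposition \ref{smoothhypersurface} handles the smoothness issue on torus orbits of cones whose generators share a facet, while Proposition \ref{smoothness} handles the remaining cones using exactly the $3\cdot\partial\Delta$ condition. Together they give that a generic member of $\mathcal{X}(\Sigma)$ is smooth everywhere.

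The main obstacle has already been resolved in the preceding subsection: the circuit-by-circuit invariance of the ``good maximal cones'' property under wall crossings (Propositions \ref{fourelements} and \ref{fiveelements}). The remaining work in the theorem itself is essentially bookkeeping, the only mildly subtle point being the verification that a $4$-cone of an arbitrary (perhaps non-projective) $\Delta$-maximal fan cannot fail to be a maximal cone of $\Delta$ in the sense of Definition \ref{maximalcones}; this is where the requirement that the rays of $\Sigma$ hit every nonzero lattice point of $\Delta$ is used.
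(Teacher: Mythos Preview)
Your proposal is correct and follows essentially the same route as the paper: start from an MPCP resolution, propagate the ``good maximal cones'' property through the moving cone via the wall-crossing invariance of Propositions~\ref{fourelements} and~\ref{fiveelements}, then use the lemma preceding the theorem to pass from projective $\Delta$-maximal fans to arbitrary maximal cones of $\Delta$, and finally invoke Propositions~\ref{smoothhypersurface} and~\ref{smoothness}. The one place where you add detail beyond the paper---checking that every $4$-cone of a possibly non-projective $\Delta$-maximal fan is a maximal cone of $\Delta$ in the sense of Definition~\ref{maximalcones}---is handled correctly, though the contradiction you reach is really with the fan axiom (intersections of cones are common faces) rather than with simpliciality alone.
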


\begin{rmk} {\rm If we view the secondary fan as parametrizing different compactifications of birational hypersurface families, then our main result can be understood as extending smooth CY compactifications to the entire moving cone, rather than just the set of cones corresponding to MPCP resolutions (which in general will form a smaller cone that is strictly contained in the moving cone).

As a related note, the secondary fan is an object of interest in mirror symmetry because it is related to the K\"ahler moduli space of a toric hypersurface family, as explained in \cite{coxkatz}, Section 6.2.  From this point of view, a wall crossing in the secondary fan corresponds to a degeneration of the K\"ahler metric resulting in a birational contraction of a CY family to a singular family, followed by resolution to a possibly different CY family.}
\end{rmk}

\section{Example of CY-threefold from $\Delta$-maximal fan}

In this section we will look at the main example from \cite{fred3} in the context of this paper.  Fix the reflexive polytope $\Delta$ in $N \cong \mathbb{Z}^4$ as 
\begin{gather*}
\Delta = \Conv((1,0,0,0),(0,1,0,0),(0,0,1,0),(0,0,0,1), \\
(1,1,1,1),(-1,-1,-1,-1),(0,0,0,-1)). 
\end{gather*}
This is a smooth Fano 4-polytope with twelve facets, and we will compute all the possible $\Delta$-maximal fans for this polytope.  Because of unimodularity, the fan $\Sigma(\Delta)$ of cones over faces of $\Delta$ is a $\Delta$-maximal fan, associated to the standard family of smooth hypersurfaces $\mathcal{X}(\Delta)$.

A $\Delta$-maximal fan different from $\Sigma(\Delta)$ must contain a cone $\Cone(v_1, \dots, v_4)$ such that $v_1, \dots, v_4$ are vertices of $\Delta$ forming a basis of $N$, and $\Conv(v_1, \dots, v_4)$ is not a face of $\Delta$.  There are exactly four such cones:
\begin{align*}
C_1 &= \Cone((1,0,0,0),(0,1,0,0),(0,0,1,0),(0,0,0,-1)) \\
C_2 &= \Cone((1,0,0,0),(0,1,0,0),(0,0,1,0),(1,1,1,1)) \\
C_3 &= \Cone((1,0,0,0),(0,1,0,0),(0,0,1,0),(-1,-1,-1,-1)) \\
C_4 &= \Cone((1,0,0,0),(0,1,0,0),(0,0,1,0),(0,0,0,1)).
\end{align*}
The cones $C_3$ and $C_4$ respectively contain the vertices $(0,0,0,-1)$ and $(1,1,1,1)$ of $\Delta$, so they cannot be cones in a $\Delta$-maximal fan.  However, $C_1$ and $C_2$ contain no additional vertices of $\Delta$.  

From this information, it can be deduced that there is exactly one $\Delta$-maximal fan other than $\Sigma(\Delta)$.  Using the same notation as \cite{fred3}, we will call this fan $\Sigma_{bl}$ (for reasons that will be explained shortly) and the associated Calabi-Yau family $\mathcal{X}_{bl}$.  $\Sigma_{bl}$ can be obtained from $\Sigma(\Delta)$ by removing the maximal cones 
\begin{align*}
C_5 &= \Cone((1,0,0,0),(0,1,0,0),(1,1,1,1),(0,0,0,-1)) \\
C_6 &= \Cone((1,0,0,0),(0,0,1,0),(1,1,1,1),(0,0,0,-1)) \\
C_7 &= \Cone((0,1,0,0),(0,0,1,0),(1,1,1,1),(0,0,0,-1)) 
\end{align*}
and replacing them with $C_1$ and $C_2$.  $\Sigma_{bl}$ can also be obtained from the fan of cones over the polytope $$\Conv((1,0,0,0),(0,1,0,0),(0,0,1,0),(0,0,0,1),(-1,-1,-1,-1)),$$ which is the fan for $\mathbb{P}^4$, by blowing up at the zero-dimensional toric strata associated to the two maximal cones 
\begin{gather*}
\Cone((1,0,0,0),(0,1,0,0),(0,0,1,0),(0,0,0,1)) \\
\Cone((1,0,0,0),(0,1,0,0),(0,0,1,0),(-1,-1,-1,-1)).
\end{gather*}
Because of this, the toric variety associated to $\Sigma_{bl}$ is isomorphic to $\mathbb{P}^4$ blown up at two points. 
Note that generic members of $\mathcal{X}_{bl}$ are guaranteed to be smooth by Theorem~\ref{goodness}.

If we intersect the cones $C_1, C_2 \in \Sigma_{bl}$ or $C_5, C_6, C_7 \in \Sigma(\Delta)$ with a transverse hyperplane, we can see the difference between $\Sigma_{bl}$ and $\Sigma(\Delta)$ in terms of three-dimensional polytopes.  In Figure~\ref{crosssections} we see that the cross-section for $\Sigma_{bl}$ consists of two tetrahedra sharing a common facet, while the cross-section for $\Sigma(\Delta)$ consists of three tetrahedra sharing a common edge.  This is the same type of wall crossing that occurred in the proof of Proposition \ref{fourelements} (see  Figure~\ref{wallcrossing}).  Also compare \cite{CLS}, Example 15.3.9.  

\begin{figure}[h] 
  \begin{center} \includegraphics[scale=0.6]{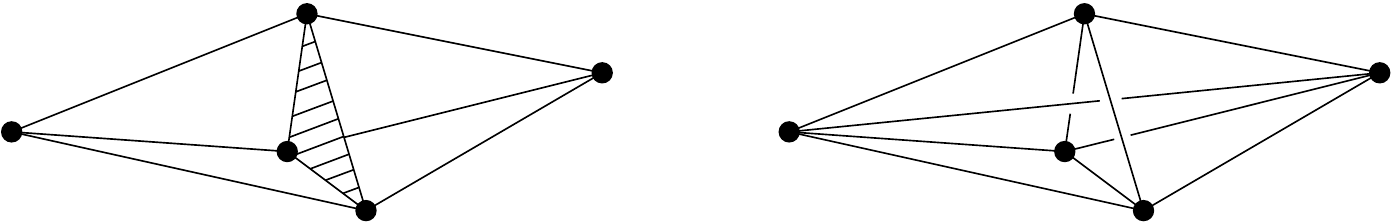} \end{center}
   \caption{\label{crosssections} Cross sections of the fan $\Sigma_{bl}$ (on the left) and $\Sigma(\Delta)$ (on the right)} 
\end{figure}

Locally analyzing the hypersurface families $\mathcal{X}(\Delta)$ and $\mathcal{X}_{bl}$ on these cones shows that they are related by a flop.  The flop contracts a single $\mathbb{P}^1$ in each member of $\mathcal{X}(\Delta)$ or $\mathcal{X}_{bl}$ to an ordinary double point, then resolves the ordinary double point to a $\mathbb{P}^1$ in the other family. 

\section{Application: Nested reflexive polytopes}

As an application, we will show how our main result, Theorem \ref{goodness}, can be used to construct an \emph{extremal transition} between families of smooth CY threefolds, given any proper inclusion $\Delta_1 \subseteq \Delta_2$ of reflexive 4-polytopes.  

If $\mathcal{X}_1$ and $\mathcal{X}_2$ are families of smooth projective CY varieties, then an extremal transition between $\mathcal{X}_1$ and $\mathcal{X}_2$ is a degeneration of $\mathcal{X}_1$ to a singular family $\mathcal{X}_0$, followed by a resolution of $\mathcal{X}_0$ to $\mathcal{X}_2$.  Extremal transitions were originally defined in the paper \cite{morrison}, which also describes how they may be constructed from pairs of reflexive polytopes $\Delta_1 \subseteq \Delta_2$.  The papers \cite{web, acjm} discuss the application of this idea to connecting the moduli spaces of different CY families.

Let $\mathcal{X}_1$ and $\mathcal{X}_2$ be Batyrev hypersurface families associated to $\Delta_1$ and $\Delta_2$, respectively.  We have a reverse inclusion $\Delta^*_2 \subseteq \Delta^*_1$ of dual reflexive polytopes. The monomials in the defining equation of the family $\mathcal{X}_i$ are in bijective correspondence with lattice points in $\Delta^*_i$.  We can obtain a degeneration of $\mathcal{X}_1$ by allowing coefficients on monomials associated to the lattice points in $\Delta^*_1 \backslash \Delta^*_2$ to approach zero.  This results in a singular family $\mathcal{X}_0$ which is birational to $\mathcal{X}_2$.  If we can find a resolution of singularities $\mathcal{X}_2 \rightarrow \mathcal{X}_0$, then we will have constructed an extremal transition.

In the standard Batyrev construction, the families $\mathcal{X}_1$ and $\mathcal{X}_2$ live in toric varieties $X(\Sigma_i)$, where $\Sigma_i$ are MPCP resolutions of the fans $\Sigma(\Delta_i)$.  This means that to obtain a resolution $\mathcal{X}_2 \rightarrow \mathcal{X}_0$ via a toric morphism $X(\Sigma_2) \rightarrow X(\Sigma_1)$, we need MPCP resolutions $\Sigma_i$ such that $\Sigma_2$ is a subdivision of $\Sigma_1$.  However, for an arbitrary pair of reflexive 4-polytopes $\Delta_1 \subseteq \Delta_2$, the needed MPCP resolutions can fail to exist.  The paper \cite{fred3} gives a counterexample using the reflexive polytope from Section 5.

Thus, to find a toric resolution in all cases, we need to consider $\Delta$-maximal fans rather than just MPCP resolutions.  The following result shows that projective $\Delta$-maximal fans suffice to solve the problem for any pair $\Delta_1 \subseteq \Delta_2$.

\begin{lemma} Let $\Delta_1 \subseteq \Delta_2$ be reflexive 4-polytopes in $N_\mathbb{R}$, and let $\Sigma_1$ be any projective $\Delta_1$-maximal fan.  Then there exists a projective $\Delta_2$-maximal fan $\Sigma_2$ such that $\Sigma_2$ refines $\Sigma_1$. 
\end{lemma}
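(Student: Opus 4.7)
My plan is to construct $\Sigma_2$ by perturbing the strictly convex support function of $\Sigma_1$ to introduce the extra lattice points of $\Delta_2$ as new rays. First I would observe that every nonzero lattice point of $\Delta_1$ is primitive and in fact lies on $\partial\Delta_2$: it is on $\partial\Delta_1$ by reflexivity of $\Delta_1$, and if it were in the interior of $\Delta_2$ it would be a nonzero interior lattice point of $\Delta_2$, contradicting reflexivity of $\Delta_2$. Hence the rays of $\Sigma_1$ form a subset of the rays required of any $\Delta_2$-maximal fan. Let $\nu_1, \dots, \nu_r$ enumerate the nonzero lattice points of $\partial\Delta_2$, split into those lying in $\Delta_1$ and the ``extra'' ones in $\Delta_2 \setminus \Delta_1$.

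Since $\Sigma_1$ is projective, fix a strictly convex piecewise linear support function $\varphi_1 \colon N_\mathbb{R} \to \mathbb{R}$ for $\Sigma_1$. For each extra point $\nu_j$ choose a small positive real $\epsilon_j$, and set
$$
\psi(\nu_i) = \begin{cases} \varphi_1(\nu_i), & \nu_i \in \Delta_1, \\ \varphi_1(\nu_i) - \epsilon_i, & \nu_i \in \Delta_2 \setminus \Delta_1. \end{cases}
$$
Let $\varphi_2 \colon N_\mathbb{R} \to \mathbb{R}$ be the convex lower envelope $\varphi_2(x) = \inf\{ \sum_i \lambda_i \psi(\nu_i) \mid x = \sum_i \lambda_i \nu_i,\ \lambda_i \geq 0 \}$, and let $\Sigma_2$ be the fan whose maximal cones are the maximal domains of linearity of $\varphi_2$. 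I would verify four properties for sufficiently small and generic $\epsilon_j$: (a) every $\nu_i$ is a ray of $\Sigma_2$ --- for the extra $\nu_j$ this is automatic from $\psi(\nu_j) < \varphi_1(\nu_j)$, and for $\nu_i \in \Delta_1$ the existing kinks of $\varphi_1$ are robust under small perturbation by strict convexity; (b) $\Sigma_2$ is simplicial, by a generic coplanarity-avoidance argument; (c) $\varphi_2$ is strictly convex, so $\Sigma_2$ is projective and nondegenerate; (d) $\Sigma_2$ refines $\Sigma_1$. These together say $\Sigma_2$ is a projective $\Delta_2$-maximal fan refining $\Sigma_1$.

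The main obstacle is (d). The difficulty is that an extra point $\nu_j$ may lie on a wall or lower-dimensional cone of $\Sigma_1$, so one must rule out that the perturbation glues pieces of neighboring cones of $\Sigma_1$ into a cone of $\Sigma_2$ crossing a wall of $\Sigma_1$. The resolution uses strict convexity of $\varphi_1$: across each wall of $\Sigma_1$ the two adjacent linear pieces of $\varphi_1$ disagree by a positive amount at every $\nu_i$ off the wall, and these disagreements have a positive infimum over the finitely many walls and the finitely many $\nu_i$. Taking $\sum_j \epsilon_j$ strictly smaller than this infimum ensures that no linear piece of $\varphi_2$ can straddle a wall of $\Sigma_1$, forcing each cone of $\Sigma_2$ to be contained in a cone of $\Sigma_1$. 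An equivalent, more combinatorial route would be to iteratively star-subdivide $\Sigma_1$ at each extra point, adjusting the support function locally by a small bump at each step to maintain strict convexity; in either formulation the same smallness bound produces the required projective refinement.
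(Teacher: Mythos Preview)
Your approach is correct and would yield a valid proof, but the paper takes a much shorter route using the secondary fan machinery already set up in Section~4. After the same reflexivity observation you make (that each nonzero lattice point of $\Delta_1$ lies on $\partial\Delta_2$, so the rays of $\Sigma_1$ are among the $\nu_i$), the paper simply notes that $\Sigma_1$ satisfies Definition~\ref{gkzdef} relative to the list $\nu_1,\dots,\nu_r \in \partial\Delta_2 \cap N$, so that $\Gamma_{\Sigma_1,\emptyset}$ is a GKZ cone lying in the moving cone of $\Sigma_{\rm GKZ}$; any maximal cone of $\Sigma_{\rm GKZ}$ in the moving cone containing it then corresponds, by Proposition~\ref{gkzproperties}, to a projective $\Delta_2$-maximal fan $\Sigma_2$ refining $\Sigma_1$. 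Your height-perturbation construction is the explicit, hands-on version of exactly this step: lowering the heights at the extra $\nu_j$ and taking the induced regular subdivision is what moves a point of $\Gamma_{\Sigma_1,\emptyset}$ into the interior of an adjacent maximal GKZ cone. The paper's argument is shorter because simpliciality, projectivity, and the refinement relation are already packaged into the GKZ formalism; your argument is more elementary and gives an explicit recipe for $\Sigma_2$, at the cost of having to verify (a)--(d) by hand. One small point worth tightening: the claim that each extra $\nu_j$ becomes a ray ``automatically from $\psi(\nu_j)<\varphi_1(\nu_j)$'' is not quite right, since if several extra points lie in a single cone of $\Sigma_1$ and satisfy a positive linear relation, one of them can be absorbed by the lower envelope; this is handled by the genericity of the $\epsilon_j$ you already invoke (or by the iterative star-subdivision variant you mention), but it is not automatic from the single inequality.
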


\begin{proof} Let $\Sigma_{\rm{GKZ}}$ be the secondary fan associated to $\nu_1, \dots, \nu_r$, the elements of $\partial \Delta_2 \cap N_\mathbb{R}$.  Then $\Gamma_{\Sigma_1, \emptyset} \in \Sigma_{\rm{GKZ}}$ is contained in the moving cone of $\Sigma_{\rm{GKZ}}$.  There must be at least one maximal cone $\sigma \in \Sigma_{\rm{GKZ}}$ such that $\Gamma_{\Sigma_1, \emptyset} \subseteq \sigma$ and $\sigma$ is also contained in the moving cone of $\Sigma_{\rm{GKZ}}$.  Then $\sigma$ is associated to a $\Delta_2$-maximal fan $\Sigma_2$ which has the needed properties.\end{proof}

As a result of the lemma, we have:

\begin{corollary} Let $\Delta_1 \subseteq \Delta_2$ be a proper inclusion of reflexive 4-polytopes, and let $\Sigma_1$ be any projective $\Delta_1$-maximal fan.  Then there is at least one projective $\Delta_2$-maximal fan $\Sigma_2$ such that an extremal transition between $\mathcal{X}(\Sigma_1)$ and $\mathcal{X}(\Sigma_2)$ exists. \end{corollary}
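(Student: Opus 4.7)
The plan is to combine the preceding lemma with Theorem~\ref{goodness}, and then use the toric morphism induced by the refinement to realize the extremal transition explicitly. First, I would apply the lemma to obtain a projective $\Delta_2$-maximal fan $\Sigma_2$ refining $\Sigma_1$. Since $\Sigma_2$ refines $\Sigma_1$ and has the same support $N_\mathbb{R}$, there is an induced toric birational morphism $\pi \colon X(\Sigma_2) \to X(\Sigma_1)$.

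Next, I would set up the degeneration on the $\Sigma_1$ side. A generic global section of the anticanonical bundle of $X(\Sigma_1)$ has the form $\sum_{m \in M \cap \Delta_1^*} c_m z^m$. Degenerating by letting $c_m \to 0$ for all $m \in \Delta_1^* \setminus \Delta_2^*$ gives a one-parameter family whose special fiber $\mathcal{X}_0 \subseteq X(\Sigma_1)$ is cut out by $\sum_{m \in M \cap \Delta_2^*} c_m z^m = 0$, with generic coefficients. By Theorem~\ref{goodness} (applied to $\Delta_1$ and $\Sigma_1$), the general member of $\mathcal{X}(\Sigma_1)$ is smooth, so we have a degeneration of a smooth CY family to $\mathcal{X}_0$.

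Then I would identify $\mathcal{X}_0$ birationally with $\mathcal{X}(\Sigma_2)$ through $\pi$. Because $\Delta_2 \subseteq \Delta_1$ forces $M \cap \Delta_2^* \subseteq M \cap \Delta_1^*$, and because $\pi$ is an isomorphism on the open torus Spec~$\mathbb{C}[M]$ shared by $X(\Sigma_1)$ and $X(\Sigma_2)$, the pullback $\pi^{-1}(\mathcal{X}_0)$ (as a proper transform, or equivalently as the closure in $X(\Sigma_2)$ of the common torus hypersurface) is a generic member of $\mathcal{X}(\Sigma_2)$; this uses the fact, noted in the paragraph preceding the lemma, that anticanonical sections on $X(\Sigma_2)$ are indexed exactly by $M \cap \Delta_2^*$. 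By Theorem~\ref{goodness} applied to $\Delta_2$ and $\Sigma_2$, this generic member is smooth, and $\pi$ restricts to a proper birational morphism $\mathcal{X}(\Sigma_2) \to \mathcal{X}_0$ that is an isomorphism over the torus, hence a resolution of singularities of $\mathcal{X}_0$.

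The main obstacle I expect is verifying carefully that $\mathcal{X}_0$ is genuinely singular (so that the transition is nontrivial) and that the proper transform of $\mathcal{X}_0$ under $\pi$ equals the generic member of $\mathcal{X}(\Sigma_2)$ rather than some strictly larger subscheme supported in exceptional loci. For the first point, it suffices to observe that if $\mathcal{X}_0$ were smooth of the same Hodge type as $\mathcal{X}(\Sigma_1)$ then the inclusion $\Delta_1^* \supseteq \Delta_2^*$ could not be proper, contradicting the hypothesis that $\Delta_1 \subsetneq \Delta_2$. For the second, since $\pi$ is an isomorphism on the open torus and $\mathcal{X}_0$ has no toric components (its defining equation is $\pi$-pulled back from a nonzero Laurent polynomial), the proper transform coincides with the closure of the torus hypersurface, which is precisely the generic member of $\mathcal{X}(\Sigma_2)$. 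These verifications, together with the smoothness of generic members of $\mathcal{X}(\Sigma_1)$ and $\mathcal{X}(\Sigma_2)$ provided by Theorem~\ref{goodness}, complete the construction of the extremal transition.
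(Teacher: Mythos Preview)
Your approach is essentially the same as the paper's: the paper treats the corollary as an immediate consequence of the preceding lemma together with Theorem~\ref{goodness} and the discussion of degeneration/resolution earlier in Section~6, without spelling out the details. You have correctly filled in those details---use the lemma to produce $\Sigma_2$ refining $\Sigma_1$, take the induced toric morphism, degenerate by sending to zero the coefficients indexed by $\Delta_1^* \setminus \Delta_2^*$, and invoke Theorem~\ref{goodness} for smoothness on both sides.

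One caveat: your argument that $\mathcal{X}_0$ is genuinely singular (``if $\mathcal{X}_0$ were smooth of the same Hodge type as $\mathcal{X}(\Sigma_1)$ then the inclusion $\Delta_1^* \supseteq \Delta_2^*$ could not be proper'') does not hold up as stated; smoothness of $\mathcal{X}_0$ would not by itself force the lattice point sets to coincide, and the Hodge-number comparison you gesture at is not developed. However, the paper itself asserts that $\mathcal{X}_0$ is singular without proof, so you are not omitting anything the paper supplies. If you want to tighten this, the cleanest route is to note that the morphism $\pi$ is not an isomorphism (since $\Sigma_2$ strictly refines $\Sigma_1$, there are new rays corresponding to lattice points of $\Delta_2 \setminus \Delta_1$), and a generic anticanonical hypersurface in $X(\Sigma_2)$ meets the corresponding exceptional divisors nontrivially, so the restricted map $\mathcal{X}(\Sigma_2) \to \mathcal{X}_0$ contracts something and $\mathcal{X}_0$ cannot be smooth.
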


The results of \cite{ks} show that any pair of reflexive 4-polytopes can be connected by a chain of inclusions.  Together with the corollary, this implies that given any pair of reflexive 4-polytopes $\Delta_1$ and $\Delta_2$, we can find projective $\Delta_i$-maximal fans $\Sigma_i$ such that the families $\mathcal{X}(\Sigma_1)$ and $\mathcal{X}(\Sigma_2)$ can be connected by a finite sequence of extremal transitions between families of smooth projective CY threefolds.

\bibliography{myrefs}
\bibliographystyle{plain}
\end{document}